\newtheorem{theorem}{Theorem}[section]
\newtheorem*{theorem*}{Theorem}
\newtheorem{lemma}[theorem]{Lemma}
\newtheorem{proposition}[theorem]{Proposition}
\newtheorem{corollary}[theorem]{Corollary}
\newtheorem*{question1}{Question 1}
\newtheorem*{question2}{Question 2}
\theoremstyle{definition}
\newtheorem{definition}[theorem]{Definition}
\theoremstyle{remark}
\def\N{{\mathbb
		N}}
\def\R{{\mathbb R}}
\DeclareMathOperator{\co}{conv}
\DeclareMathOperator{\lin}{lin}
\newcommand{\xast}{x^{\ast}}
\newcommand{\xastast}{x^{\ast\ast}}
\newcommand{\yast}{y^{\ast}}
\newcommand{\yastast}{y^{\ast\ast}}
\newcommand{\zast}{z^{\ast}}
\newcommand{\Xast}{X^{\ast}}
\newcommand{\Xastast}{X^{\ast\ast}}
\newcommand{\Yastast}{Y^{\ast\ast}}
\title[Bidual octahedral renormings and strong regularity]
{Bidual octahedral renormings and strong regularity in Banach spaces} 
\begin{document}
\begin{abstract} We prove that every separable Banach space containing an isomorphic copy of $\ell_1$ can be equivalently renormed so that the new bidual norm is octahedral. This answers, in the separable case, a question in Godefroy (1989) \cite{godefroy_metric_1989}. As a direct consequence, we obtain that every dual Banach space, with a separable predual and failing to be strongly regular, can be equivalently renormed with a dual norm to satisfy the strong diameter two property. 

\end{abstract}

 \author{ Johann Langemets }\thanks{The work of J. Langemets was supported by the Estonian Research Council grant (PUTJD702), by institutional research funding IUT (IUT20-57) of the Estonian Ministry of Education and Research, and by a grant of the Institute of Mathematics of the University of Granada (IEMath-GR)}
\address{Institute of Mathematics and Statistics, University of Tartu, J. Liivi 2, 50409 Tartu, Estonia}\email{johann.langemets@ut.ee}

 \author{ Gin\'es L\'opez-P\'erez }\thanks{The work of G. L\'opez-P\'erez was supported by MICINN (Spain) Grant PGC2018-093794-B-I00 and by Junta de Andaluc\'ia Grant FQM-0185.}
\address{Universidad de Granada, Facultad de Ciencias.
Departamento de An\'{a}lisis Matem\'{a}tico, 18071-Granada
(Spain)}\email{ glopezp@ugr.es}

\subjclass[2010]{Primary 46B03, 46B20, 46B22}

\keywords{Octahedral norm, diameter two properties, strong regularity, renorming}

\maketitle

\section{Introduction}

The existence of isomorphic copies of $\ell_1$ in a Banach space has been a central topic in Banach space theory, in fact there are well-known isomorphic characterizations independent of the considered norm in the space, as the one given by H. Rosenthal in \cite{rosenthal}. Also, there are purely geometrical characterizations, in terms of the considered norm in the space. For example, in \cite{maurey},  B. Maurey shows a celebrated characterization of separable Banach spaces containing isomorphic copies of $\ell_1$ as those separable Banach spaces $X$ satisfying that there exists a  $\xastast\in \Xastast\setminus\{0\}$ such that
\[
\Vert x+\xastast \Vert = \Vert x-\xastast \Vert \quad \text{for all $x\in X$}.
\]
Note that this characterization remains true even for equivalent norms. The proof of Maurey's theorem is quite involved, however an elementary approach, which
works in the important special case of subspaces of weakly sequentially complete Banach lattices, is contained in \cite{yagoub-zidi}. Maurey's theorem is closely related to the study of $\ell_1$-types (also studied by V. Kadets, V. Shepelska and D. Werner in \cite{kadets}) and fails in the non-separable case \cite{maurey}. Later, octahedral norms were introduced in an unpublished paper by G. Godefroy and B. Maurey \cite{godefroy_maurey} (see \cite{godefroy_metric_1989}), and this kind of norms was used by G. Godefroy and N. Kalton in \cite{godefroy_kalton} around the study of the ball topology in Banach spaces, with applications to the existence of unique preduals. 

We recall that the norm $\|\cdot\|$ on a Banach space $X$ (or $X$) is called \emph{octahedral}
if, for every finite-dimensional subspace $E$ of $X$ and every $\varepsilon>\nobreak0$,
there is~a $y\in S_X$ such that
\[
\|x+y\|\geq(1-\varepsilon)\bigl(\|x\|+\|y\|\bigr)\quad\text{for all $x\in E$.}
\]
In \cite[Lemma~9.1]{godefroy_kalton} it is proved that a separable Banach space $X$ is octahedral if, and only if, there exists a  $\xastast\in \Xastast\setminus\{0\}$ such that
\[
\Vert x+\xastast \Vert = \Vert x\Vert +\Vert \xastast \Vert \quad \text{for all $x\in X$}.
\]
It is clear that $\ell_1$ (as $L_1$ or $C([0,1])$) is octahedral, and also it is easy to find non-octahedral equivalent norms in $\ell_1$. However, if we allow the space to be renormed, then G. Godefroy proved the following general characterization:

\begin{theorem*}[see {\cite[Theorem~II.4]{godefroy_metric_1989}}]\label{thm: godefroy contain ell1} Let $X$ be a Banach space. The following assertions are equivalent:
	\begin{enumerate}
		\item[(i)] $X$ contains a subspace isomorphic to $\ell_1$.
		\item[(ii)] there exists an equivalent norm $|\cdot|$ in $X$such that $(X,|\cdot|)$ is octahedral.
		\item[(iii)] there exists an equivalent norm $|||\cdot|||$ in $X$ and $\xastast\in \Xastast\setminus\{0\}$ such that 
		\[
		|||x+\xastast|||=|||x|||+|||\xastast||| \quad \text{for all $x\in X$.}.
		\]
	\end{enumerate}
\end{theorem*}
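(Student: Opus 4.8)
The plan is to prove the cyclic chain of implications $(iii)\Rightarrow(ii)\Rightarrow(i)\Rightarrow(iii)$, using Maurey's characterization and the known equivalence between octahedrality and the bidual functional condition for separable spaces. Let me think through each piece.

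The implication $(iii)\Rightarrow(ii)$ should be the easiest. If there is an equivalent norm $|||\cdot|||$ and $\xastast \neq 0$ with $|||x+\xastast|||=|||x|||+|||\xastast|||$ for all $x\in X$, I want to show $(X,|||\cdot|||)$ is octahedral. So given a finite-dimensional subspace $E$ and $\varepsilon>0$, I need $y\in S_X$ with $|||x+y|||\geq(1-\varepsilon)(|||x|||+|||y|||)$ for all $x\in E$. The idea is to use Goldstine's theorem: $\xastast$ is a weak* limit of elements from the ball of radius $|||\xastast|||$, so I can approximate $\xastast$ by elements $y\in X$ on the unit sphere (after normalizing) that "behave additively" on the finite-dimensional $E$. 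The additivity condition on $\xastast$ is a pointwise statement on all of $X$; transferring it to an approximate additive statement on a finite set (a finite $\varepsilon$-net of the sphere of $E$) is what Goldstine gives me.

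The implication $(ii)\Rightarrow(i)$ is where I'd invoke the existing characterizations. If $X$ has an equivalent octahedral norm, I want $X\supseteq\ell_1$. In the separable case, Lemma~9.1 of Godefroy--Kalton gives a $\xastast\neq0$ with $|||x+\xastast|||=|||x|||+|||\xastast|||$, and then Maurey's theorem (which, as the excerpt stresses, is norm-independent) yields the copy of $\ell_1$: indeed $\xastast$ witnesses Maurey's condition after rescaling, since octahedral additivity forces the symmetric equality $|||x+\xastast|||=|||x-\xastast|||$. Strictly, Lemma~9.1 is stated for separable $X$, so to reach the general statement I would either appeal to Godefroy's original argument or reduce to the separable case; the cleanest route is probably to observe that octahedrality of $X$ passes to a suitable separable subspace containing the relevant finite-dimensional data, where Lemma~9.1 and Maurey apply. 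This reduction is the step I expect to require the most care.

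For $(i)\Rightarrow(iii)$, I again lean on Maurey. If $X\supseteq\ell_1$, Maurey's theorem gives $\xastast\neq0$ with $\Vert x+\xastast\Vert=\Vert x-\xastast\Vert$ for all $x\in X$ in the original norm, and this persists under equivalent renorming. The task is to manufacture from this \emph{symmetric} condition a \emph{norm} $|||\cdot|||$ realizing the \emph{additive} condition $|||x+\xastast|||=|||x|||+|||\xastast|||$. The natural candidate is to define the new norm via a formula that builds the $\ell_1$-type splitting directly into it, for instance setting $|||x+t\xastast|||$ by an explicit expression (something like $\Vert x\Vert+|t|\,\Vert\xastast\Vert$ on the subspace $X\oplus\lin\{\xastast\}$ of $\Xastast$, restricted to $X$), and checking it is an equivalent norm. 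The hard part here is verifying that this construction actually produces a genuine equivalent norm on $X$ whose bidual interacts correctly with $\xastast$, rather than merely a norm on the span $X+\lin\{\xastast\}$.

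Overall, the main obstacle is the interplay between $X$ and its bidual: all three conditions reference $\xastast\in\Xastast$, and the real content lies in moving between a pointwise additive/symmetric identity on $X$ and the finite-dimensional approximate additivity defining octahedrality. I expect the Goldstine-type approximation in $(iii)\Rightarrow(ii)$ and the separable reduction in $(ii)\Rightarrow(i)$ to carry the essential weight, with Maurey's theorem doing the heavy lifting on the $\ell_1$ side.
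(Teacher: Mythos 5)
First, a point of reference: the paper does not prove this statement — it is quoted from Godefroy \cite{godefroy_metric_1989} as background, with no proof supplied — so your attempt can only be measured against Godefroy's original argument and general soundness. Your implications (iii)$\Rightarrow$(ii) and (ii)$\Rightarrow$(i) are essentially workable. The Goldstine plus weak$^*$-lower-semicontinuity mechanism you invoke for (iii)$\Rightarrow$(ii) is exactly the one the present paper uses in Lemma 2.1. For (ii)$\Rightarrow$(i) you do not need Maurey or Godefroy--Kalton at all: iterating the octahedrality condition (pick $y_1\in S_X$, then $y_{n+1}$ witnessing octahedrality for $E_n=\lin\{y_1,\dots,y_n\}$ with $\varepsilon_n$ chosen so that $\prod_n(1-\varepsilon_n)>1/2$) yields a sequence equivalent to the $\ell_1$-basis directly and with no separability hypothesis. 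Your proposed reduction to a separable subspace can be made to work by a closing-off argument, but be aware that octahedrality does not pass to arbitrary subspaces, so "passes to a suitable separable subspace" needs an actual construction; as written it is a detour around an elementary direct argument.

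The genuine gap is in (i)$\Rightarrow$(iii), which carries the entire content of the theorem. Two problems. First, Maurey's theorem is a separable-only result — the paper itself records that it fails in the non-separable case — so it cannot serve as the starting point for general $X$. Second, and more seriously, your proposed construction is circular: if you set $|||x+t\xastast|||=\Vert x\Vert+|t|\,\Vert\xastast\Vert$ on $X\oplus\lin\{\xastast\}$ and then "restrict to $X$", you simply recover the original norm on $X$ (take $t=0$), and the bidual norm of $(X,\Vert\cdot\Vert)$ is thereby already determined; you have no freedom to prescribe its values on $X+\lin\{\xastast\}\subset\Xastast$ after the fact. The real task is to define an equivalent norm \emph{on $X$ alone}, typically by an explicit formula built from an $\ell_1$-sequence $(e_n)$ in $X$ (for instance a $\limsup_n\Vert x+e_n\Vert$-type expression, after first passing to an almost isometric copy of $\ell_1$ via James's distortion theorem), and then to verify that the \emph{induced} bidual norm is additive at a weak$^*$ cluster point $\xastast$ of $(e_n)$. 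That construction and its verification are absent from your proposal, so (i)$\Rightarrow$(iii) is not established.
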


It is known that a Banach space $X$ is octahedral whenever $\Xastast$ is. However, the converse is not true in general. The natural norm of $C([0,1])$ is octahedral, but its bidual norm is not octahedral, because the characteristic function of a singleton is a point of Fr\'{e}chet smoothness of the
bidual norm to the natural norm of $C([0,1])$. Therefore a natural problem was posed by G. Godefroy in \cite{godefroy_metric_1989}:

\begin{question1}[see {\cite[p. 12]{godefroy_metric_1989}}]\label{question: question1}
	If $X$ contains an isomorphic copy of $\ell_1$, does there always exist an equivalent norm on $X$ such that the bidual $\Xastast$ is octahedral? 
\end{question1}

The main goal of this note is to give a positive answer, in the separable case, to the above question. 

Currently it is known that there is a close relationship between octahedral norms and other geometrical properties in Banach spaces, for example the Daugavet property \cite{kadets2} or the diameter two properties. A Banach space $X$ has the strong diameter two property if every convex combination of slices in $B_X$, the unit ball of $X$, has diameter two. It is known that a dual Banach space $X^*$ is octahedral if, and only if, $X$ satisfies the strong diameter two property \cite{becerra_octahedral_2014}, which gives a complete duality relation between these two properties.

Recall that a Banach space is said to be strongly regular if every closed, bounded, and convex subset of $X$ contains convex combinations of slices with diameter arbitrarily small. Strong regularity is a weaker isomorphic property than the well-known Radon-Nikodym property in Banach spaces (see \cite{ghoussub_some_1987} for background). It is known that $\Xast$ is strongly regular if and only if $X$ does not contain isomorphic copies of $\ell_1$ \cite[Corollary~VI.18]{ghoussub_some_1987}. Hence, Question~\ref{question: question1} is a particular case (a dual case in fact) of the general open question: 
\begin{question2}[see {\cite{becerra_extreme_2015}}]\label{question: question2}
	Can every Banach space failing to be strongly regular be equivalently renormed such that it has the strong diameter two property? 
\end{question2}

Let us now describe the organization of the paper. After some notation and preliminaries, we start Section 2 with the definition of an octahedral set in a Banach space, which tries to be a localization of octahedrality for subsets in a Banach space, giving local octahedrality properties in the bidual. Such a set can be obtained in a dual Banach space, whenever the predual satisfies the strong diameter two property, exploiting the dual relation between octahedrality and the strong diameter two property. In Proposition \ref{prop: abstract bidual renorming} we give a general result which produces an octahedral bidual norm in a Banach space from another Banach space with an octahedral set. Section 3 studies the properties of a subset with the strong diameter two property constructed by M. Talagrand \cite{schachermayer_moduli_1989} in the dual of $C(\Delta)$, the space of continuous functions on the Cantor set $\Delta$, which will be crucial to get our main goal. The initial interest of this set was to answer affirmatively to the question posed by J. Diestel and J. Uhl, about the relation between $w^*$-dentability of $w^*$-compact subsets of a dual Banach space and the existence of $\ell_1$-copies in the predual. Finally, we conclude in Lemmas \ref{lemma: f_p is a good set} and \ref{lemma: isormphic copy of ell_1} the existence of an isometric $\ell_1$-sequence in $C(\Delta)$ which is an octahedral set in some subspace of $C(\Delta)$ equipped with a different norm. Section 4 implements the general results of Section 1 for the space $C(\Delta)$ and the octahedral set obtained in Section 3 to get in Theorem \ref{thm: renorming C(Delta)*} an equivalent norm in $C(\Delta)$ with octahedral bidual norm. Finally, using the good embedding of $C(\Delta)^*$ in the dual of every separable Banach space with $\ell_1$-copies, we get at Theorem \ref{thm: separable bidual octahedral} that every separable Banach space with isomorphic copies of $\ell_1$ has an equivalent norm with octahedral bidual norm, which answers Question \ref{question: question1} in the separable case. Also, some partial answer in the non-separable setting will be obtained.  As a direct consequence, we obtain in Corollary \ref{corollary: strongregularity} that every dual Banach space, with a separable predual and failing to be strongly regular, can be equivalently renormed with a dual norm to satisfy the strong diameter two property, which is a partial answer to Question 2  (in fact, an answer to the dual case). We finish with other consequences around the ball topology on Banach spaces and questions.

We pass now to introduce some notation. We consider only real Banach spaces. For a Banach space $X$, $\Xast$ denotes the topological dual of $X$, $B_X$ and $S_X$ stand for the closed unit ball and unit sphere of $X$, respectively, and $w$, respectively $w^*$, denotes the weak and weak-star topology in $X$, respectively $\Xast$. For a subspace $Y$ of $X$, $Y^{\perp}:=\{f\in X^*: f(Y)=\{0\}\}$, which is a subspace of $X^*$. Then $Y^{\perp \perp}$ (the perp of $Y^{\perp}$) is a subspace of $X^{**}$. By $\lin A$ we denote the linear span of the subset $A$ of $X$. A slice of a set $C$ in $X$ is a set of $X$ given by 
\[
S(C,\xast,\alpha):=\{x\in C\colon \xast(x)>\sup \xast(C)-\alpha \},
\]
where $\xast \in S_\Xast$ and $\alpha>0$. A $w^*$-slice of a set $C$ of $\Xast$ is a slice of $C$ determined by elements of $X$. By $j$ we will denote the canonical embedding of $X$ into $\Xastast$. If $Y$ is a subspace of $\Xast$, then $X_{\mid Y}$ will denote the set of functionals in $j(X)$ restricted to $Y$, that is, $X_{\mid Y}:=\{x_{\mid Y}:x\in X\subset X^{**}\}$.

\section{Previous results}
From \cite[Proposition~2.1]{haller} we know that a Banach space $X$ is octahedral if, and only if, for every $n\in \N$, $\varepsilon>0$, and for every  $x_1,\ldots ,x_n\in S_X$ there is a $y\in S_X$ such that $$\Vert x_i + y\Vert \geq 2-\varepsilon\ \quad \text{for every $i\in\{1,\dots,n \}.$}$$
It would be natural to call then a subset $A$ of $S_X$ to be  $X$-octahedral, if it satisfies that for every $n\in \N$, $\varepsilon>0$, and for every  $x_1,\ldots ,x_n\in S_X$ there is $a\in A$ such that $$\Vert x_i + a\Vert \geq 2-\varepsilon \ \quad \text{for every $i\in\{1,\dots,n \}.$}$$
In the case $X$ is separable and octahedral, it is essentially proved in \cite{kadets}, using $\ell_1$-types techniques, that it is possible to choose a $X$-octahedral subset as a $\ell_1$-sequence.
 
In the case $X^{**}$ is octahedral, then the $w^*$-lower semicontinuity of the norm in  $X^{**}$ gives that it is possible to choose a $X^{**}$-octahedral set $A$ in such way that $A\subset X$. The next easy lemma shows different ways to get the octahedrality of $X^{**}$.

\begin{lemma}\label{lemma: bidual octahedral} Let $X$ be a Banach space. The following are equivalent:
	\begin{itemize}
		\item[(i)] $\Xastast$ is octahedral.
		\item[(ii)] for every $n\in \N$, $\xastast_1,\ldots ,\xastast_n\in S_{\Xastast}$, and $\varepsilon>0$, there is $\yastast\in S_{\Xastast}$ such that $$\Vert \xastast_i + \yastast\Vert \geq 2-\varepsilon\ \quad \text{for every $i\in\{1,\dots,n \}.$}$$
		\item[(ii')] for every $n\in \N$, $\xastast_1,\ldots ,\xastast_n\in S_{\Xastast}$, and $\varepsilon>0$ there is $y\in S_{X}$ such that $$\Vert \xastast_i + y\Vert \geq 2-\varepsilon\ \quad \text{for every $i\in\{1,\dots,n \}.$}$$
		\item[(iii)] for every $r\geq1$, $n\in \N$, $\xastast_1,\ldots ,\xastast_n\in rB_{\Xastast}$, and $\varepsilon>0$ there is $\yastast\in B_{\Xastast}\setminus\{0\}$ such that $$\Vert \xastast_i + \yastast\Vert \geq (1-\varepsilon)(\Vert \xastast_i\Vert +1)\ \quad \text{for every $i\in\{1,\dots,n \}.$}$$
		\item[(iii')] for every $r\geq1$, $n\in \N$, $\xastast_1,\ldots ,\xastast_n\in rB_{\Xastast}$, and $\varepsilon>0$ there is $y\in B_{X}\setminus\{0\}$ such that $$\Vert \xastast_i +y\Vert \geq (1-\varepsilon)(\Vert \xastast_i\Vert +1)\ \quad \text{for every $i\in\{1,\dots,n \}.$}$$
	\end{itemize}
\end{lemma}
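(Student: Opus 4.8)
The plan is to obtain the equivalence (i)$\Leftrightarrow$(ii) essentially for free: it is exactly the characterization of octahedrality quoted above from \cite[Proposition~2.1]{haller}, now read in the Banach space $\Xastast$ in place of $X$. Everything else amounts to relating the reformulations (ii), (ii'), (iii), (iii') among themselves, and the only genuine point is to descend from test elements $\yastast\in S_\Xastast$ in the bidual to test elements $y\in S_X$ in the space itself; the reverse passages (ii')$\Rightarrow$(ii) and (iii')$\Rightarrow$(iii) are immediate, since $j(S_X)\subset S_\Xastast$.

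For the descent (ii)$\Rightarrow$(ii'), which I expect to be the heart of the argument, I would fix $\xastast_1,\dots,\xastast_n\in S_\Xastast$ and $\varepsilon>0$, apply (ii) with a smaller tolerance to get $\yastast\in S_\Xastast$ with $\Vert\xastast_i+\yastast\Vert\geq 2-\varepsilon/3$ for all $i$, and pick $f_i\in S_{\Xast}$ almost norming these sums, so that $(\xastast_i+\yastast)(f_i)\geq 2-\varepsilon/3$. As each of $\xastast_i(f_i)$ and $\yastast(f_i)$ is at most $1$, this forces both to be at least $1-\varepsilon/3$. Goldstine's theorem then furnishes $y\in B_X$ inside the $w^*$-neighbourhood of $\yastast$ cut out by $f_1,\dots,f_n$ and $\varepsilon/3$, whence $f_i(y)\geq 1-2\varepsilon/3$ and therefore $\Vert\xastast_i+y\Vert\geq \xastast_i(f_i)+f_i(y)\geq 2-\varepsilon$. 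Since $\Vert y\Vert\geq f_i(y)$ is within $\varepsilon$ of $1$, rescaling $y$ to $u=y/\Vert y\Vert\in S_X$ distorts the estimate by at most $1-\Vert y\Vert$, and a marginally sharper choice of the initial tolerances then delivers $\Vert\xastast_i+u\Vert\geq 2-\varepsilon$, which is (ii'). Alternatively one can quote the $w^*$-lower semicontinuity of the norm, as in the remark preceding the statement, to place such a $y$ in any prescribed $w^*$-neighbourhood of $\yastast$.

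The remaining implications are routine rescalings. For (ii)$\Rightarrow$(iii), given $\xastast_1,\dots,\xastast_n\in rB_\Xastast$ I would normalise the nonzero ones to $u_i=\xastast_i/\Vert\xastast_i\Vert\in S_\Xastast$, apply (ii) to produce $\yastast\in S_\Xastast$ together with almost-norming functionals $f_i\in S_{\Xast}$ for $\Vert u_i+\yastast\Vert\geq 2-\varepsilon$, so that $u_i(f_i)\geq 1-\varepsilon$ and $\yastast(f_i)\geq 1-\varepsilon$, and then simply expand $(\xastast_i+\yastast)(f_i)=\Vert\xastast_i\Vert\, u_i(f_i)+\yastast(f_i)\geq(1-\varepsilon)(\Vert\xastast_i\Vert+1)$; the indices with $\xastast_i=0$ are trivial, and $\yastast\in S_\Xastast\subset B_\Xastast\setminus\{0\}$. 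Conversely (iii)$\Rightarrow$(ii) is the special case $r=1$: then $\Vert\xastast_i\Vert=1$, so the hypothesis reads $\Vert\xastast_i+\yastast\Vert\geq 2(1-\varepsilon)$, which forces $\Vert\yastast\Vert\geq 1-2\varepsilon$, and passing to $\yastast/\Vert\yastast\Vert$ costs at most $1-\Vert\yastast\Vert$ in the estimate, giving (ii) after renaming $\varepsilon$. Carrying out these same two computations with the auxiliary vectors kept inside $B_X$ and $S_X$ yields (ii')$\Leftrightarrow$(iii'), and the cycle of implications closes. The main obstacle, and the only nonelementary ingredient, is the controlled use of Goldstine's theorem (equivalently, of the $w^*$-lower semicontinuity of the bidual norm) in (ii)$\Rightarrow$(ii'); the rest is bookkeeping of error terms.
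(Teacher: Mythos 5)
Your proposal is correct, and it uses the same essential ingredients as the paper ---the characterization from \cite[Proposition~2.1]{haller} for (i)$\Leftrightarrow$(ii), Goldstine's theorem for the descent from $\Xastast$ to $X$, and rescaling for the normalization--- but it arranges the implications differently. The paper proves the lean chain (i)$\Rightarrow$(iii)$\Rightarrow$(iii')$\Rightarrow$(ii')$\Rightarrow$(ii)$\Rightarrow$(i): the descent to $X$ happens at the level of (iii)$\Rightarrow$(iii'), by taking a net $y_\lambda\in B_X$ converging weak$^*$ to $\yastast$ and invoking the weak$^*$ lower semicontinuity of the bidual norm on the quantities $\Vert \xastast_i+y_\lambda\Vert$, and the only rescaling needed is in (iii')$\Rightarrow$(ii') (which is exactly your ``(iii)$\Rightarrow$(ii) computation carried out inside $B_X$''). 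You instead descend at the level of (ii)$\Rightarrow$(ii'), making the lower semicontinuity explicit through almost-norming functionals $f_i$ and the weak$^*$-neighbourhood they cut out, and you prove (ii)$\Leftrightarrow$(iii) and (ii')$\Leftrightarrow$(iii') separately; note that your observation $x^{\ast\ast}_i(f_i)\geq 1-\varepsilon/3$ and $\yastast(f_i)\geq 1-\varepsilon/3$ is also precisely the mechanism behind Lemma~\ref{lemma: SD2P implies OH in the dual} later in the paper. The trade-off is that your graph of implications contains some redundancy (you never actually need (ii)$\Rightarrow$(iii) once (i)$\Rightarrow$(iii) is observed to be immediate from the definition of octahedrality applied to the finite-dimensional span of the $\xastast_i$), whereas the paper's version isolates the single genuinely analytic step in (iii)$\Rightarrow$(iii'); on the other hand your functional-based descent is more self-contained and makes the quantitative bookkeeping visible. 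Both arguments are complete modulo the routine sharpening of tolerances that you flag.
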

\begin{proof}
	The equivalence between (i) and (ii) is done in \cite{haller}. Obviously (ii') $\Rightarrow$ (ii), (iii') $\Rightarrow$ (iii), and (i) $\Rightarrow$ (iii). 
	 
 (iii')$\Rightarrow$(ii'). Let $n\in \N$, $\xastast_1,\dots, \xastast_n\in S_{\Xastast}$, and $\varepsilon>0$. By (iii') there is a $y\in B_X\setminus\{0\}$ such that 
 \[
 \|\xastast_i +y\|>(1-\frac{\varepsilon}4)(1 +1)=2-\frac{\varepsilon}2 \quad \text{for every $i\in\{1,\dots, n\}$}.
 \]
 Therefore $\Vert y\Vert>1-\frac{\varepsilon}2$ and
 \[
 \|\xastast_i +\frac{y}{\Vert y\Vert}\|\geq \|\xastast_i +y\|-\Vert y-\frac{y}{\|y\|}\Vert\geq 2-\frac{\varepsilon}2-(1-\|y\|)>2-\varepsilon
 \]
 for every $i\in\{1,\dots, n\}$.
  
 (iii)$\Rightarrow$(iii'). Let $r\geq1$, $n\in \N$, $\xastast_1,\dots, \xastast_n\in rB_{\Xastast}$, and $\varepsilon>0$. By (iii) there is a $\yastast\in B_{\Xastast}\setminus\{0\}$ such that 
 \[
 \|\xastast_i +\yastast\|>(1-\varepsilon)(\|\xastast_i\| +1) \quad \text{for every $i\in\{1,\dots, n\}$}.
 \]
 By Goldstine's theorem there is a net $\{y_\lambda\}\subset B_X\setminus\{0\}$ such that $y_\lambda$ weak$^*$ converges to $\yastast$. Finally, by the weak$^*$ lower semicontinuity of the norm in $\Xastast$, we deduce that there is a $\lambda_0$ such that 
  \[
 \|\xastast_i +y_{\lambda_0}\|>(1-\varepsilon)(\|\xastast_i\| +1) \quad \text{for every $i\in\{1,\dots, n\}$}.
 \]
	\end{proof}

 The equivalence between (i) and (iii') in Lemma~\ref{lemma: bidual octahedral} motivates the following definition.

\begin{definition}
	Let $X$ be a Banach space and fix $B$ a closed, convex, and bounded subset of $X^{**}$. A subset $A\subset B_X\setminus\{0\}$ is called an \emph{octahedral set for $B$} if for every $n\in \N$, $\xastast_1,\dots, \xastast_n\in B$, and $\varepsilon>0$ there is an element $a\in A$ such that 
	\[
	\Vert \xastast_i + a \Vert\geq (1-\varepsilon)(\Vert \xastast_i\Vert +1)\quad \text{for every $i\in\{1,\dots,n \}.$}
	\]
	In the case $B=B_{X^{**}}$ we will say that $A$ is an octahedral set for $X^{**}$, without mentioning $B_{X^{**}}$.
\end{definition}
Observe that an octahedral set for $X^{**}$ is a subset of $X$ giving the octahedrality in $X^{**}$ and so in $X$.

As we say in the introduction there is a complete duality relation between octahedrality and strong diameter two property. In fact, a Banach space $X$ satisfies the strong diameter two property (SD2P in short) if, and only if, $X^*$ is octahedral. The next lemma uses this dual relation to get octahedral subsets from SD2P subsets.
\begin{lemma}\label{lemma: SD2P implies OH in the dual}
	Let $X$ be a Banach space. Assume that there is $A\subset B_X\setminus\{0\}$  such that for every $n\in \N$, $0<\varepsilon<2$, and every average of slices of $B_{X^*}$, $\frac1n \sum_{i=1}^{n} S(B_{\Xast}, \xastast_i, \varepsilon)$,  there exist $\xast_i, \yast_i\in S(B_{\Xast}, \xastast_i, \varepsilon)$ and $x\in A$ such that
	$$\left(\frac1n\sum_{i=1}^{n}\xast_i-\frac1n\sum_{i=1}^{n}\yast_i\right)(x)>2-\varepsilon.$$
	
	Then for every $n\in \N$, $\xastast_1,\dots, \xastast_n\in S_{\Xastast}$, and $\varepsilon>0$ there exists $y\in A$ such that 
	\[
	\|\xastast_i +y\|>2-\varepsilon \quad \text{for every $i\in\{1,\dots, n\}$},
	\] and so $A$ is an octahedral set for $X^{**}$.
\end{lemma}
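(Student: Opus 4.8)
The plan is to apply the hypothesis directly to the slices determined by the given functionals $\xastast_1,\dots,\xastast_n$ and to take the required $y$ to be the element $x\in A$ that the hypothesis produces. Fix $n\in\N$, $\xastast_1,\dots,\xastast_n\in S_{\Xastast}$ and $\varepsilon>0$, and choose a small parameter $\delta>0$ with $\delta<\min\{2,\varepsilon/(n+1)\}$. Since each $\xastast_i$ has norm one, $\sup\xastast_i(B_{\Xast})=1$, so the slices $S(B_{\Xast},\xastast_i,\delta)=\{\xast\in B_{\Xast}:\xastast_i(\xast)>1-\delta\}$ are nonempty and are legitimate inputs to the hypothesis. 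Applying the hypothesis to the average $\frac1n\sum_{i=1}^n S(B_{\Xast},\xastast_i,\delta)$ with parameter $\delta$ yields $\xast_i,\yast_i\in S(B_{\Xast},\xastast_i,\delta)$ and $x\in A$ with $\big(\frac1n\sum_i\xast_i-\frac1n\sum_i\yast_i\big)(x)>2-\delta$. I will set $y:=x$.

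The crux is to pass from this single averaged inequality to the $n$ individual estimates $\|\xastast_i+x\|>2-\varepsilon$. First, writing the displayed inequality as $\frac1n\sum_i\xast_i(x)-\frac1n\sum_i\yast_i(x)>2-\delta$ and using $\|x\|\le1$ (so $\frac1n\sum_i\yast_i(x)\ge-1$), I get $\frac1n\sum_i\xast_i(x)>1-\delta$. Now comes the main point: since each term $\xast_i(x)\le1$ and their average exceeds $1-\delta$, no single term can be smaller than $1-n\delta$ (otherwise the remaining $n-1$ terms, each at most $1$, would force the average down to at most $1-\delta$). Hence $\xast_i(x)>1-n\delta$ for every $i$ simultaneously. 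This averaging step --- converting the averaged, SD2P-type information supplied by the hypothesis into uniform control over all coordinates --- is the heart of the argument and the step I expect to require the most care.

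With both estimates in hand the conclusion is immediate: for each $i$, using that $\xast_i\in S(B_{\Xast},\xastast_i,\delta)$ gives $\xastast_i(\xast_i)>1-\delta$, and evaluating on $\xast_i\in B_{\Xast}$,
\[
\|\xastast_i+x\|\ge(\xastast_i+x)(\xast_i)=\xastast_i(\xast_i)+\xast_i(x)>(1-\delta)+(1-n\delta)=2-(n+1)\delta>2-\varepsilon,
\]
so $y=x\in A$ works. Finally, to upgrade this to the statement that $A$ is an octahedral set for $\Xastast$, I would run exactly this argument on the normalized vectors $\xastast_i/\|\xastast_i\|$ (treating any $\xastast_i=0$ by throwing in an auxiliary unit vector to force $\|x\|\ge1-\varepsilon$) and track the scalars: the same two estimates give $\|\xastast_i+x\|\ge\|\xastast_i\|(1-\delta)+(1-n\delta)\ge(1-\varepsilon)(\|\xastast_i\|+1)$ for $\delta\le\varepsilon/(n+1)$, which is the defining inequality of an octahedral set for $\Xastast$; alternatively one invokes the equivalences of Lemma~\ref{lemma: bidual octahedral}, whose reductions carry over verbatim with $y$ ranging in $A$.
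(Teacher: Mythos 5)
Your proposal is correct and follows essentially the same route as the paper's proof: apply the hypothesis to the average of the slices $S(B_{\Xast},\xastast_i,\delta)$ for a sufficiently small $\delta$ (the paper uses $\delta=\varepsilon/(2n)$, you use $\delta<\varepsilon/(n+1)$), use the ``average close to the maximum forces every term close to the maximum'' argument to get $\xast_i(x)$ uniformly large, and then bound $\|\xastast_i+x\|$ from below by evaluating at $\xast_i$. The only cosmetic difference is that the paper first extracts the individual bounds on the differences $\xast_i(x)-\yast_i(x)$ and then on $\xast_i(x)$, while you average the $\xast_i(x)$ first; the final upgrade to the octahedral-set condition is handled in both cases by the reductions of Lemma~\ref{lemma: bidual octahedral}.
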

\begin{proof}
	Let $n\in \N$, $\xastast_1,\dots, \xastast_n\in S_{\Xastast}$, and $\varepsilon>0$. Consider now the convex combination of slices $\frac1n \sum_{i=1}^{n} S(B_{\Xast}, \xastast_i, \frac{\varepsilon}{2n})$, then by our assumption there are $\xast_i, \yast_i\in S(B_{\Xast}, \xastast_i, \frac{\varepsilon}{2n})$ and a $x\in A$ such that 
	$$\left(\frac1n\sum_{i=1}^{n}\xast_i-\frac1n\sum_{i=1}^{n}\yast_i\right)(x)>2-\frac{\varepsilon}{2n}.$$
	Then $x^*_i(x)-y^*_i(x)>2-\frac{\varepsilon}{2}$ for every for every $i\in\{1,\dots,n \}$ and so $x^*_i(x)>1-\frac{\varepsilon}{2}$ for every for every $i\in\{1,\dots,n \}.$
	
	Finally, we obtain that 
	$$\Vert x_i^{**}+x\Vert\geq x_i^{**}(x_i^*)+x_i^*(x)>1-\frac{\varepsilon}{2n}+1-\frac{\varepsilon}{2}\geq 2-\varepsilon.$$
	Therefore, by the equivalence between (ii') and (iii') in Lemma~\ref{lemma: bidual octahedral}, $A$ is an octahedral set for $\Xastast$.
\end{proof}
Our strategy will be to renorm a Banach space with a bidual octahedral norm, starting from another Banach space with a good octahedral subset. The next proposition is a stability property under renorming, in this direction.

\begin{proposition}\label{prop: abstract bidual renorming}
	Let $(X, \Vert \cdot \Vert_X)$ be a Banach space. Assume that there exists a Banach space $(Y, \Vert \cdot \Vert_Y)$ and a bounded linear operator $S\colon X \to Y$ such that the following conditions hold:
	\begin{itemize}
		\item[(A1)] there exists an octahedral set $B\subset B_Y$ for $\Yastast$;
		\item[(A2)] there exists $Z$ a subspace of $X$ such that $\left.S\right|_{Z}\colon (Z, \|\cdot\|_X)\to (\widehat{B}, \|\cdot\|_Y)$ is an onto isomorphism, where $\widehat{B}$ denote the closed linear span $B$.
	\end{itemize}
	
	Then there is an equivalent norm in $X$ such that the bidual $X^{\ast\ast}$ is octahedral.
\end{proposition}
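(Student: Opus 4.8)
The plan is to transport the octahedrality of $B$ from $\Yastast$ to $\Xastast$ through the operator $S$, renorming $X$ so that the preimage
\[
A:=(S|_Z)^{-1}(B)\subset Z
\]
becomes an octahedral set for $\Xastast$; the octahedrality of the bidual then follows from Lemma~\ref{lemma: bidual octahedral}. Concretely, I would fix the lower bound $c>0$ of the isomorphism $S|_Z$ (so that $\|Sz\|_Y\geq c\|z\|_X$ for $z\in Z$ by (A2)), choose $0<\rho\leq c$, and define the equivalent norm
\[
|x|:=\max\{\|Sx\|_Y,\ \rho\|x\|_X\}\qquad(x\in X),
\]
which satisfies $\rho\|x\|_X\leq|x|\leq\max\{\|S\|,\rho\}\|x\|_X$. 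By the choice of $\rho$ one has $|z|=\|Sz\|_Y$ for every $z\in Z$, so $S|_Z$ becomes isometric and $|a|=\|Sa\|_Y=\|b\|_Y\leq1$ for the element $b\in B$ with $a=(S|_Z)^{-1}(b)$; hence $A\subset B_{(X,|\cdot|)}\setminus\{0\}$.

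The next step is to identify the new bidual norm. Since the dual ball of $|\cdot|$ is the $w^*$-closed convex hull of $S^{\ast}(B_{\Yast})\cup\rho B_{\Xast}$, taking polars (and using that $x^{\ast\ast}$ is $w^*$-continuous on $\Xast$) gives
\[
|x^{\ast\ast}|^{\ast\ast}=\max\{\|S^{\ast\ast}x^{\ast\ast}\|_Y,\ \rho\|x^{\ast\ast}\|_X\}\qquad(x^{\ast\ast}\in\Xastast).
\]
In particular $\|S^{\ast\ast}\|\leq1$, so if $x^{\ast\ast}_1,\dots,x^{\ast\ast}_n$ lie in the new unit ball of $\Xastast$, then $S^{\ast\ast}x^{\ast\ast}_i\in B_{\Yastast}$. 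Given $\varepsilon>0$, I would apply (A1), i.e. the octahedrality of $B$ for $\Yastast$, to the points $S^{\ast\ast}x^{\ast\ast}_1,\dots,S^{\ast\ast}x^{\ast\ast}_n$, obtaining $b\in B$ with
\[
\|S^{\ast\ast}x^{\ast\ast}_i+b\|_Y\geq(1-\varepsilon)\bigl(\|S^{\ast\ast}x^{\ast\ast}_i\|_Y+1\bigr)\quad\text{for all }i.
\]
Setting $a:=(S|_Z)^{-1}(b)\in A$ and identifying $Y$ with its image in $\Yastast$, one has $S^{\ast\ast}a=Sa=b$, so from the bidual norm formula
\[
|x^{\ast\ast}_i+a|^{\ast\ast}\geq\|S^{\ast\ast}x^{\ast\ast}_i+b\|_Y\geq(1-\varepsilon)\bigl(\|S^{\ast\ast}x^{\ast\ast}_i\|_Y+1\bigr).
\]

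To close the argument via the equivalence (ii')$\Leftrightarrow$(iii') in Lemma~\ref{lemma: bidual octahedral} (equivalently, from the definition of octahedral set for $\Xastast$), I need the right-hand side to be at least $(1-\varepsilon)(|x^{\ast\ast}_i|^{\ast\ast}+1)$, that is, $\|S^{\ast\ast}x^{\ast\ast}_i\|_Y$ should not be much smaller than $|x^{\ast\ast}_i|^{\ast\ast}=\max\{\|S^{\ast\ast}x^{\ast\ast}_i\|_Y,\rho\|x^{\ast\ast}_i\|_X\}$. \emph{This comparison is the main obstacle.} The difficulty is concentrated exactly on the directions of $\Xastast$ that $S^{\ast\ast}$ fails to see, namely the subspace $(\ker S)^{\perp\perp}$: on them the maximum is realized by the term $\rho\|\cdot\|_X$, and since the background norm $\|\cdot\|_X$ is in no way octahedral, adding $a$ cannot produce the required additivity there. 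The natural way I would resolve this is to force these directions to be absent, i.e. to arrange that $S^{\ast\ast}$ is an isomorphic embedding (equivalently, that $S$ is bounded below on all of $X$): then $\|S^{\ast\ast}x^{\ast\ast}\|_Y\geq c\|x^{\ast\ast}\|_X$ and, for $\rho$ small, $|x^{\ast\ast}|^{\ast\ast}=\|S^{\ast\ast}x^{\ast\ast}\|_Y$, so $|\cdot|=\|S\cdot\|_Y$ makes $(X,|\cdot|)$ isometric to the closed subspace $S(X)\subset Y$ with $B\subset S(X)$, and $A$ is octahedral for $(X,|\cdot|)^{\ast\ast}=S(X)^{\perp\perp}$. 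This is the point where one must exploit more than the bare isomorphism $S|_Z$; in the intended applications $Y=C(\Delta)$ is isometrically universal for separable spaces, so such a bounded-below $S$ with $B$ in its range is available, and this reduction is what I expect to carry the real weight of the proof.
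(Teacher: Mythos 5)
Your proposal correctly isolates the central obstacle --- the directions of $\Xastast$ that $S^{\ast\ast}$ does not see --- but it does not overcome it; instead it replaces the hypotheses by a much stronger one. Requiring $S$ to be bounded below on all of $X$ is not part of (A1)--(A2) and is simply not available in the situations the proposition is designed for: in Theorem \ref{thm: renorming C(Delta)*} the operator $S=i\circ j_{\mathcal Y}$ is a restriction map composed with an inclusion into $(\mathcal Y_*,|\cdot|_*)$ and is not an embedding of $C(\Delta)$, and in Theorem \ref{thm: separable bidual octahedral} the operator is built from a quotient map $\pi\colon X\to X/Y$, whose kernel is huge. So the reduction you propose proves a different (and essentially vacuous for the applications) statement, and the proposition as stated remains unproved by your argument.

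The idea you are missing is the choice of the second summand in the new norm. The paper first renorms so that $S|_Z$ is an isometry onto $\widehat B$, and then sets
\[
p(x)=\Vert Sx\Vert_Y+\Vert x+Z\Vert ,
\]
an $\ell_1$-type sum of $\Vert S\cdot\Vert_Y$ with the \emph{quotient seminorm modulo $Z$}, rather than a maximum with a multiple of $\Vert\cdot\Vert_X$. Then $(X,p)$ embeds isometrically into $Y\oplus_1 X/Z$, so the bidual norm is $\tilde p(x^{\ast\ast})=\Vert S^{\ast\ast}x^{\ast\ast}\Vert_{\Yastast}+\Vert x^{\ast\ast}+Z^{\perp\perp}\Vert$. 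The point is that the troublesome directions are handled by a seminorm that is \emph{invariant under translation by elements of $Z$}: for $z=(S|_Z)^{-1}(b)$ one has $\Vert x^{\ast\ast}+z+Z^{\perp\perp}\Vert=\Vert x^{\ast\ast}+Z^{\perp\perp}\Vert$, so the full additive gain $\Vert S^{\ast\ast}x^{\ast\ast}+b\Vert\geq\Vert S^{\ast\ast}x^{\ast\ast}\Vert+1-\varepsilon$ coming from (A1) passes directly to $\tilde p(x^{\ast\ast}+z)\geq\tilde p(x^{\ast\ast})+1-\varepsilon$, with no comparison between $\Vert S^{\ast\ast}x^{\ast\ast}\Vert$ and the rest of the norm ever being needed. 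Your max-norm cannot achieve this because the two terms compete instead of adding, which is exactly where your argument stalls.
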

\begin{proof}
	As the restriction of $S$ to $Z$ is an onto isomorphism, we can renorm $Z$ so that $S$ is a onto isometry. Now, we can extend the above norm to an equivalent norm $\Vert \cdot \Vert$ on $X$.
	Define a new norm on $X$ by 
	$$
	p(x)=\Vert Sx\Vert_Y +\Vert x+Z\Vert \quad \text{for all $x\in X$}.
	$$ 
	
	It is clear that $p$ is a seminorm. As $\left.S\right|_{Z}$ is  one to one, then $p$ is a norm. Denote by $r:= \Vert S\Vert$. In order to see that $p$ and $\Vert\cdot\Vert$ are equivalent norms, we show that $\frac1{r+2} \Vert x\Vert \leq p(x)\leq (r+1)\Vert x\Vert$ for every $x\in X$. The upper estimate is clear. For the lower estimate, assume that $x\in X$ with $\Vert x\Vert=1$. If $\Vert x+Z\Vert \geq \frac1{r+2}$ then it is clear that $p(x)\geq \frac{\Vert x\Vert}{r+2}$. Assume now that $\Vert x+Z\Vert < \frac1{r+2}$, then there is a $z\in Z$ with $\Vert x-z\Vert <\frac1{r+2}$. Hence, $\Vert z \Vert\geq \Vert x \Vert -\Vert x-z\Vert>\frac{r+1}{r+2}$ and
	\begin{align*}
	p(x)&\geq \Vert Sx\Vert_Y\geq \Vert Sz\Vert_Y-\Vert S(x-z)\Vert_Y\\
	&\geq \Vert z \Vert -r\Vert x-z\Vert\geq \frac{r+1}{r+2}-\frac{r}{r+2}=\frac{\Vert x\Vert}{r+2}.
	\end{align*}
	Note that $(X,p)$ is isometric to a subspace of $Y\oplus_1 X/Z$, thus its bidual $(X, p)^{\ast\ast}$ is isometric to a subspace of $\Yastast\oplus_1 X^{**}/Z^{\perp\perp}$. Denote by $\tilde{p}$ the bidual norm of $(X, p)^{\ast\ast}$, that is, 
	\[
	\tilde{p}(x^{\ast\ast})=\Vert S^{\ast\ast}x^{\ast\ast}\Vert_{\Yastast}+\Vert x^{\ast\ast}+Z^{\perp\perp}\Vert \quad \text{for all $x^{\ast\ast}\in X^{\ast\ast}$}. 
	\]
	
	Finally, we prove that $(X^{**},\tilde{p})$ has an octahedral norm. Let $n\in \N$, $x^{\ast\ast}_1, \dots, x^{\ast\ast}_n$ in $X^{\ast\ast}$ with $\tilde{p}(x^{\ast\ast}_i)=1$, and $\varepsilon>0$. By assumption (A1), $B$ is an octahedral set for $\Yastast$, thus we can find $b\in B$ such that $\Vert S^{\ast\ast}x^{\ast\ast}_i+b\Vert_{\Yastast} \geq \Vert S^{\ast\ast}x^{\ast\ast}_i\Vert_{\Yastast}+1-\varepsilon$ for every $i\in\{1,\dots,n\}$. Find now an element $z\in Z$ such that $b=Sz$. Since $S^{\ast\ast}z=Sz=b$, we have now
	\begin{align*}
	\tilde{p}(x^{\ast\ast}_i+z)&=\Vert S^{\ast\ast}x^{\ast\ast}_i+S^{\ast\ast}z\Vert_{\Yastast} + \Vert x^{\ast\ast}_i+z+Z^{\perp\perp}\Vert\\
	&\geq \Vert S^{\ast\ast}x^{\ast\ast}_i\Vert_{\Yastast}+1-\varepsilon+\Vert x^{\ast\ast}_i+Z^{\perp\perp}\Vert\\
	&= \tilde{p}(x^{\ast\ast}_i)+1-\varepsilon=2-\varepsilon
	\end{align*}
	for every $i\in\{1,\dots,n\}$.
\end{proof}

 The next proposition gives a dual Banach space $Y$ from a weak$^{\ast}$ compact and convex set $C$ inside another dual Banach space $X^*$ such that the unit ball of $Y$ is $\co(C\cup-C)$. This result is essentially known, but we give the proof here for sake of completeness.

\begin{proposition}\label{prop: renorm subspace SD2P}
	Let $X$ be a Banach space, $C\subset B_{\Xast}$ a weak$^{\ast}$ compact and convex set, and $Y$ be the linear span of $C$. Then there is a (non necessarily equivalent) dual norm $|\cdot|$ in $Y$ satisfying $\vert\cdot\vert\geq \Vert\cdot\Vert_Y$ such that $B_{(Y,\vert\cdot\vert)}=\co(C\cup-C)$. Furthermore, if $i$ is the inclusion map from $(Y,|\cdot|)$ into $X^*$, and $i^*$ is the adjoint operator from $X^{**}$ into $(Y,|\cdot|)^*$, then $i^*(X)$ is a dense subspace of $(Y,|\cdot|)_*$, the predual of $(Y,|\cdot|)$, with $\Vert i^*\Vert\leq 1$. Finally, if $Y$ is closed in $X^*$, then $Y$ is $w^*$-closed and $|\cdot|$ and $\Vert\cdot\Vert_Y$ are equivalent norms.  
\end{proposition}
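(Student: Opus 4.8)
The plan is to realize $|\cdot|$ as the Minkowski gauge of the symmetric convex set $K:=\co(C\cup -C)$ and then to identify its predual explicitly. First I would check that $K$ is convex, symmetric, contained in $B_{\Xast}$ and—crucially—$w^*$-compact. Convexity and symmetry are immediate; the key point is $w^*$-compactness. Although convex hulls of compact sets need not be compact, here $C$ is already convex, so $K=\{ac_1-(1-a)c_2 : c_1,c_2\in C,\ a\in[0,1]\}$ is the image of the compact set $C\times C\times[0,1]$ under the jointly $w^*$-continuous map $(c_1,c_2,a)\mapsto ac_1-(1-a)c_2$, hence $w^*$-compact. Since $K\subset B_{\Xast}$ is $w^*$-compact (so norm-closed), bounded, balanced and absorbing in $Y=\lin(C)=\lin(K)$, its gauge $|y|:=\inf\{t>0: y\in tK\}$ is a norm on $Y$ with $B_{(Y,|\cdot|)}=K$, and $|\cdot|\geq\Vert\cdot\Vert_Y$ because $K\subset B_{\Xast}$.

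Next I would prove that $|\cdot|$ is a dual norm by exhibiting its predual as $W:=\overline{i^*(X)}$, the $|\cdot|$-closure of $i^*(X)$ in $(Y,|\cdot|)^*$, where $i^*(x)=x_{\mid Y}$. Since $\Vert i^*(x)\Vert_{(Y,|\cdot|)^*}=\sup_{k\in K}|i^*(x)(k)|=\sup_{c\in C}|c(x)|\leq\Vert x\Vert$, one gets $\Vert i^*\Vert\leq1$. I would then show the canonical map $J\colon (Y,|\cdot|)\to W^*$ is an onto isometry. For the isometry it suffices that $i^*(X)$ be norming: given $y$ with $|y|>1$, i.e.\ $y\notin K$, Hahn--Banach separation in $(\Xast,w^*)$—whose dual is $X$—yields $x\in X$ with $\langle y,x\rangle>\sup_{k\in K}\langle k,x\rangle=\Vert i^*(x)\Vert_{(Y,|\cdot|)^*}$ (using symmetry of $K$), and normalizing shows $\sup\{|i^*(x)(y)|:\Vert i^*(x)\Vert\leq1\}\geq|y|$. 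For surjectivity, note that on the bounded set $K$ the topology $\sigma(Y,W)$ agrees with $\sigma(Y,i^*(X))$, which is just the relative $w^*$-topology; hence $K$ is $\sigma(Y,W)$-compact, so $J(K)$ is a $w^*$-compact, convex, symmetric subset of $B_{W^*}$, and a further separation in $(W^*,w^*)$—whose dual is $W$—forces $J(K)=B_{W^*}$. Thus $J$ is an isometric isomorphism, so $(Y,|\cdot|)=W^*$, the norm $|\cdot|$ is dual, and $i^*(X)$ is dense in its predual $W=(Y,|\cdot|)_*$.

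Finally, suppose $Y$ is norm-closed in $\Xast$. Then both $(Y,\Vert\cdot\Vert_Y)$ and $(Y,|\cdot|)$ are Banach spaces and the identity $(Y,|\cdot|)\to(Y,\Vert\cdot\Vert_Y)$ is a continuous bijection, so by the open mapping theorem its inverse is bounded; together with $|\cdot|\geq\Vert\cdot\Vert_Y$ this gives equivalence, say $\Vert\cdot\Vert_Y\leq|\cdot|\leq M\Vert\cdot\Vert_Y$. To see that $Y$ is $w^*$-closed I would invoke the Krein--\v{S}mulian theorem: for each $r>0$ one has $Y\cap rB_{\Xast}=MrK\cap rB_{\Xast}$, an intersection of two $w^*$-compact sets and hence $w^*$-closed, so $Y$ itself is $w^*$-closed.

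I expect the main obstacle to be the second paragraph: correctly setting up the predual and verifying both the norming property and the surjectivity of $J$, each of which rests on a Hahn--Banach separation performed in the appropriate weak$^*$ topology and on the $w^*$-compactness of $K$ established in the first step.
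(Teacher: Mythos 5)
Your proof is correct, and its overall architecture (build a norm with unit ball $K=\co(C\cup -C)$, identify the predual as the closure of $i^*(X)$, then use the open mapping theorem and Krein--\v{S}mulian for the final claim) matches the paper's. The two places where you diverge are worth noting. First, the construction of $|\cdot|$: the paper introduces, for each $\varepsilon>0$, the equivalent norm $\|\cdot\|_\varepsilon$ on $\Xast$ with unit ball $K+\varepsilon B_{\Xast}$, sets $|\xast|=\sup_{\varepsilon>0}\|\xast\|_\varepsilon$ on the set where this is finite, and then verifies $\bigcap_{\varepsilon>0}(K+\varepsilon B_{\Xast})=K$; you instead take the Minkowski gauge of $K$ directly, justified by the observation that $K$ is $w^*$-compact because $C$ is already convex, so $K$ is the image of $C\times C\times[0,1]$ under a map that is jointly continuous on bounded sets for the $w^*$ topology. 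Your route is shorter and isolates the genuinely needed fact (norm-closedness of $K$, which both proofs use); the paper's $\ell_\infty$-sum construction buys completeness of $(Y,|\cdot|)$ immediately, whereas you recover completeness only a posteriori from the identification $(Y,|\cdot|)\cong W^*$. Second, the duality step: the paper invokes Kaijser's theorem (a normed space whose unit ball is compact in a coarser Hausdorff locally convex topology is a dual space, with predual the closure of the evaluation functionals), while you reprove this Dixmier--Ng-type statement from scratch via two Hahn--Banach separations, correctly performed in $(\Xast,w^*)$ and $(W^*,w^*)$ respectively, together with the standard fact that on bounded sets the topology induced by a norm-dense subspace of the dual coincides with the one induced by the whole dual. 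Both treatments of the last assertion ($Y$ closed implies equivalence of the norms and $w^*$-closedness) are the same argument.
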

\begin{proof}
	Define $K:=\co(C\cup -C)$. Now, for every $\varepsilon>0$, we define an equivalent norm $\|\cdot\|_\varepsilon$ in $\Xast$ whose new unit ball is the set $B_{(\Xast,\|\cdot\|_\varepsilon)}=K+\varepsilon B_{\Xast}$.
	
	Denote by
	$$
	Z:=\{\xast\in \Xast\colon \sup_{\varepsilon>0}\|\xast\|_\varepsilon <\infty \}
	$$
	and define a norm on $Z$ by $|\xast|:=\sup_{\varepsilon>0}\|\xast\|_\varepsilon$ for every $\xast \in Z$. Observe that $B_{(Z,|\cdot|)}\subset (1+\varepsilon)B_Z$ for every $\varepsilon$ and thus $|\cdot|\geq \|\cdot\|_Z$. Note that $(Z,\vert\cdot\vert)$ is isometric to the diagonal subspace of the $\ell_{\infty}$-sum of the family of Banach spaces $\{(X^*,\Vert\cdot\Vert_{\varepsilon}):\varepsilon>0\}$. Then $Z$ is a Banach space.
	
	Now $B_{(Z,|\cdot|)}=K$, that is,  $K= \bigcap_{\varepsilon>0} (K+\varepsilon B_{\Xast})$. Indeed, clearly $K\subset K+\varepsilon B_{\Xast}$ for every $\varepsilon>0$. For the converse, if $\xast \in \bigcap_{\varepsilon>0} (K+\varepsilon B_{\Xast})$, then there are $k_\varepsilon\in K$ and $\xast_\varepsilon\in B_{\Xast}$ such that $\xast=k_\varepsilon+\varepsilon \xast_\varepsilon$ for every $\varepsilon>0$. Hence, $\|\xast-k_\varepsilon\|= \varepsilon\|\xast_\varepsilon\|\leq \varepsilon$ for every $\varepsilon>0$. Thus, $d(\xast, K)=0$ and $\xast\in K$, because $K$ is closed.  
	
	From the equality $B_{(Z,|\cdot|)}=K$, we get that $Z=\lin B_{(Z,|\cdot|)}=\lin K=Y$. Then $(Y,|\cdot|)$ is a Banach space whose unit ball is compact in a locally convex and separated topology in $Y$, the weak-star topology of $X^*$ on $Y$, and so $(Y,|\cdot|)$ is a dual space whose predual is the closure in $(Y,|\cdot|)^*$ of $X_{\mid Y}:=\{x_{\mid Y}:x\in X\subset X^{**}\}$ (see \cite{kaijser}). Now  it is clear that $i^*(X)$ is a dense subspace of $(Y,|\cdot|)_*$ and $\Vert i^*\Vert\leq 1$, since $\Vert i\Vert \leq 1$. In the case  $Y$ is closed in $X^*$, we have that $|\cdot|$ is an equivalent norm in $Y=Z$, since $Y$ is closed in $X$, $(Z,\vert\cdot\vert)$ is complete and $|\cdot|\geq \|\cdot\|_Z$, and so $Y$ is $w^*$-closed, applying Banach-Dieudonn\'e Theorem. 
\end{proof}

\section{Talagrand set}

We start introducing some notations and results from \cite[Theorem~4.6]{schachermayer_moduli_1989}, where a "good" subset in $C(\Delta)^*$ is constructed with the SD2P. This construction, completed by M. Talagrand, will be crucial to get our main results.

Consider a natural number $s\geq 3$ and let $(N_s)_{s\geq 3}$ be a partition of $\N$ into disjoint infinite sets. Now fix $s\geq 3$. For $I\subset \N$, $i\in \N$, define on the $i$th copy of $\{0,1\}$, denoted by $\{0,1\}_i$, a measure
$$
\nu^{(i)}_{s,I} = \begin{cases} \frac1s\delta^{(i)}_0+\frac{s-1}s\delta^{(i)}_1 &\mbox{if } i \in I \\ 
\frac{s-1}s\delta^{(i)}_0+\frac1s\delta^{(i)}_1 & \mbox{if } i \notin I. \end{cases} 
$$

Now for $J\subset \N$, $I\subset J$, and $p\in J$, define
$$
\mu^{J}_{s,I}=\underset{i\in J}{\bigotimes}\nu_{s,I}^{(i)},
$$
$$
\bar{\mu}^{J,p}_{s,I}=\delta^{(p)}_0\otimes\left(\underset{i\in J\setminus\{p\}}{\bigotimes}\nu_{s,I}^{(i)}\right), \quad
\bar{\bar{\mu}}^{J,p}_{s,I}=\delta^{(p)}_1\otimes\left(\underset{i\in J\setminus\{p\}}{\bigotimes}\nu_{s,I}^{(i)}\right),
$$
and
$$
\rho^J_{I}=\underset{s\geq 3}{\bigotimes}\mu^{J\cap N_s}_{s,I\cap N_s}.
$$
If $J=\N$, then we will use the notation $\rho_I:=\rho^{\N}_{I}$.

We also consider the operator $T_J\colon C(\Delta_J)\to C(\Delta_J)$ defined by $T_J(f)(I)=\rho^J_{I}(f)$, for every $I\subset J$, where we have identified $\Delta_J$ with the power set $\mathcal{P}(J)$. One can easily check that $T^*_J\colon \mathcal{M}(\Delta_J)\to \mathcal{M}(\Delta_J)$ satisfies $T^*_J(\delta^J_I)=\rho^J_{I}$, where $\delta^J_I$ is the Dirac measure at $I$ on $\Delta_J$, and then $T^{\ast}_J(\theta)=w^{\ast}\mbox{-}\int_{\Delta_J}\rho^J_{I}\;d\theta(I)$.
If $J=\N$, then we will use the notation $T:=T_{\N}$.


Note that the identification between $\Delta$ and $\mathcal{P}(\N)$ is done in the following way: if $x\in \{0,1\}^{\N}$ then we see $x$ as the element in $\Delta$ given by $I_x=\{n\in \N:x(n)=1\}$; if $I\subset \N$ then we can see $I$ as the element $x_I\in \Delta$ given by $x_I(n)=1$ if $n\in I$ and $x_I(n)=0$  otherwise. In this way, we have
$$
\nu^{(i)}_{s,I\cap N_s } = \begin{cases} \frac1s\delta^{(i)}_0+\frac{s-1}s\delta^{(i)}_1 &\mbox{if }\  x_I(i)=x_{N_s}(i)=1 \\ 
\frac{s-1}s\delta^{(i)}_0+\frac1s\delta^{(i)}_1 &  {\rm otherwise}. \end{cases} 
$$

Denote by $P_{\Delta}$ the set of probability measures on $\Delta$. In \cite[Theorem~4.6]{schachermayer_moduli_1989} it was shown that $\mathcal{C}:=T^{\ast}(P_{\Delta})$ is a convex $w^{\ast}$-compact set of $P_{\Delta}$ with the property that every convex combination of weak slices of $\mathcal{C}$ has diameter two. 

The next lemma and proposition are part of the proof of {\cite[Theorem~4.6]{schachermayer_moduli_1989}} and give one of the tools to find a key octahedral set. We include it here for sake of completeness. 
\begin{lemma}[see {\cite[Lemma~4.8]{schachermayer_moduli_1989}}]\label{lemma: existence of p}
	For every $s\geq 3$, every $\delta>0$, there exists $k=k(s,\delta)$ such that: 
	
	For every $n\in \N$, every $J\subset \N$, $\vert J\vert\geq kn$, every $(\varphi_i)_{i\in \{1,\dots,n\}}\in \mathcal{M}(\Delta_J)^*$, $\Vert \varphi_i\Vert \leq 1$, there exists $p\in J$ such that 
	\[
	\sup_{I\in \{\emptyset,J\}}\sup_{i\in\{1,\dots,n\}} \vert\varphi_i(\bar{\mu}^{J,p}_{s,I}-\bar{\bar{\mu}}^{J,p}_{s,I})\vert<\delta.
	\]
\end{lemma}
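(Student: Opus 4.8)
The plan is to fix $s\geq 3$ and $\delta>0$ and to bound, for a \emph{single} functional $\varphi_i$ and a \emph{single} $I\in\{\emptyset,J\}$, the number of indices $p\in J$ for which $|\varphi_i(\bar{\mu}^{J,p}_{s,I}-\bar{\bar{\mu}}^{J,p}_{s,I})|\geq\delta$ by a constant depending only on $s$ and $\delta$; the lemma then follows by pigeonholing once $|J|\geq kn$. The conceptual starting point is to read the two measures as conditional measures. Writing $\mu:=\mu^{J}_{s,I}=\bigotimes_{q\in J}\nu^{(q)}_{s,I}$, the coordinates are $\mu$-independent, and since $\nu^{(p)}_{s,I}=a_p\delta^{(p)}_0+b_p\delta^{(p)}_1$ with $\{a_p,b_p\}=\{1/s,(s-1)/s\}$, conditioning the product on the $p$-th coordinate gives
\[
\bar{\mu}^{J,p}_{s,I}=\mu(\,\cdot\mid x_p=0),\qquad \bar{\bar{\mu}}^{J,p}_{s,I}=\mu(\,\cdot\mid x_p=1),
\]
where $\{x_p=0\}$ (resp.\ $\{x_p=1\}$) denotes the clopen subset of $\Delta_J$ on which the $p$-th coordinate equals $0$ (resp.\ $1$). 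In particular both measures are absolutely continuous with respect to $\mu$, and so is the difference $D_p:=\bar{\mu}^{J,p}_{s,I}-\bar{\bar{\mu}}^{J,p}_{s,I}$.

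The second step is to replace the bidual functional $\varphi_i$ by an honest function. Since $f\mapsto f\,d\mu$ embeds $L^1(\mu)$ isometrically into $\mathcal{M}(\Delta_J)$, the composition $f\mapsto\varphi_i(f\,d\mu)$ is a functional of norm at most $\|\varphi_i\|\leq 1$ on $L^1(\mu)$, hence is represented by some $g=g_{i,I}\in L^\infty(\mu)$ with $\|g\|_\infty\leq 1$. As $D_p$ has a bounded $\mu$-density, this yields
\[
\varphi_i(D_p)=\int g\,d\bar{\mu}^{J,p}_{s,I}-\int g\,d\bar{\bar{\mu}}^{J,p}_{s,I},
\]
that is, $\varphi_i(D_p)$ is exactly the gap between the conditional averages of $g$ given $x_p=0$ and given $x_p=1$.

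The heart of the matter is then a variance (orthogonality) estimate on the product space $(\Delta_J,\mu)$. For each $q\in J$ let $u_q\in L^2(\mu)$ be the mean-zero function depending on the $q$-th coordinate alone whose value is the conditional average of $g$ given $x_q$ minus $\int g\,d\mu$. Distinct coordinates being $\mu$-independent, the $u_q$ are pairwise orthogonal in $L^2(\mu)$, and a short computation shows $\langle g-\int g\,d\mu,\,u_q\rangle=\|u_q\|_{L^2(\mu)}^2$; hence $g-\int g\,d\mu-\sum_q u_q$ is orthogonal to every $u_q$, which forces $\sum_{q\in J}\|u_q\|_{L^2(\mu)}^2\leq\operatorname{Var}_\mu(g)\leq\|g\|_\infty^2\leq 1$. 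On the other hand a direct computation gives $\|u_p\|_{L^2(\mu)}^2=a_pb_p\,(\varphi_i(D_p))^2=\tfrac{s-1}{s^2}(\varphi_i(D_p))^2$, whence $\sum_{p\in J}(\varphi_i(D_p))^2\leq s^2/(s-1)$. Consequently at most $s^2/((s-1)\delta^2)$ indices $p$ can be bad for the fixed pair $(i,I)$.

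Finally, summing this count over $i\in\{1,\dots,n\}$ and over the two values $I\in\{\emptyset,J\}$, at most $2n\,s^2/((s-1)\delta^2)$ indices $p\in J$ are bad for some pair. Choosing $k=k(s,\delta):=\lfloor 2s^2/((s-1)\delta^2)\rfloor+1$ forces $|J|\geq kn$ to exceed this number, so some $p\in J$ is good for all $i$ and both $I$ simultaneously, which is the assertion. The only genuinely delicate points are the $L^\infty(\mu)$-representation of the bidual functionals $\varphi_i$ and the clean orthogonal splitting of $g$ into coordinate contributions; granting these, the remainder is the elementary identity $a_pb_p=(s-1)/s^2$ together with a pigeonhole count.
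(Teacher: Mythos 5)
Your argument is correct, but note that there is nothing in the paper to compare it against: the paper states this lemma as an import, citing \cite[Lemma~4.8]{schachermayer_moduli_1989}, and gives no proof of its own (only the subsequent Proposition, which uses it, is proved). Judged on its own terms, your proof checks out at every step. The identification of $\bar{\mu}^{J,p}_{s,I}$ and $\bar{\bar{\mu}}^{J,p}_{s,I}$ as the conditionings of the product measure $\mu=\mu^{J}_{s,I}$ on the $p$-th coordinate is valid, and both have $\mu$-density bounded by $s$ since each factor $\nu^{(q)}_{s,I}$ gives each point mass at least $1/s$, so $D_p$ does lie in the isometric copy $L^1(\mu)\,d\mu\subset\mathcal{M}(\Delta_J)$; restricting $\varphi_i$ there legitimately produces $g_{i,I}\in L^\infty(\mu)$ with $\Vert g_{i,I}\Vert_\infty\le 1$, and you are right to let $g$ depend on $I$ since $\mu$ itself does. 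The orthogonality of the $u_q$ follows from independence of the coordinates, the identity $\langle g-\int g\,d\mu,\,u_q\rangle=\Vert u_q\Vert_{L^2(\mu)}^2$ is the tower property, and Bessel's inequality then gives $\sum_{q\in J}\Vert u_q\Vert_{L^2(\mu)}^2\le \operatorname{Var}_\mu(g)\le 1$ (for infinite $J$ one applies this to finite subsets, which is harmless). The two-point computation $\Vert u_p\Vert_{L^2(\mu)}^2=a_pb_p\bigl(\varphi_i(D_p)\bigr)^2$ with $a_pb_p=(s-1)/s^2$ is correct, so at most $s^2/((s-1)\delta^2)$ indices are bad for each of the $2n$ pairs $(i,I)$, and $k(s,\delta)=\lfloor 2s^2/((s-1)\delta^2)\rfloor+1$ works, with the strict inequality $<\delta$ holding for any surviving $p$. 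This influence-type second-moment bound is the standard mechanism for results of this kind and is in the spirit of the original argument in \cite{schachermayer_moduli_1989}; your version is a clean, self-contained proof with an explicit constant.
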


\begin{proposition}[see the proof of {\cite[Theorem~4.6]{schachermayer_moduli_1989}}]\label{prop: C has SD2P}
Let $n\in \N$ and $S_1,\dots, S_n$ be slices of $\mathcal{C}$. Then for every $s\geq 3$ there exist $J\subset \N$, $|J|\geq n$, and $p\in J$ such that 
\[
\sigma_1:=\frac{1}{n}\left(\sum_{i=1}^{l}\mu^J_{s,J}\otimes \bar{\gamma}_i+\sum_{i=l+1}^{n}\mu^J_{s,\{p\}}\otimes \bar{\gamma}_i\right)\in \frac{1}{n}\sum_{i=1}^{n}S_i,
\]
\[
\sigma_2:=\frac{1}{n}\left(\sum_{i=1}^{l}\mu^J_{s,J\setminus\{p\}}\otimes \bar{\gamma}_i+\sum_{i=l+1}^{n}\mu^J_{s,\{\emptyset\}}\otimes \bar{\gamma}_i\right)\in \frac{1}{n}\sum_{i=1}^{n}S_i,
\]
where $l\in \{1,\dots, n\}$ and $\bar{\gamma}_i$ is a probability measure for every $i\in \{1,\dots,n\}$. 
\end{proposition}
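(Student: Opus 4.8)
The plan is to reproduce Talagrand's argument, using Lemma~\ref{lemma: existence of p} as the essential tool to locate a single coordinate that is negligible for all the functionals defining the given slices at once, and then to exhibit the two convex combinations by imposing opposite weights at that coordinate.

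First I would normalize. Writing each slice as $S_i=S(\mathcal{C},\varphi_i,\alpha)$ with a common width $\alpha>0$ and $\|\varphi_i\|\le 1$, set $M_i:=\sup_{c\in\mathcal{C}}\varphi_i(c)$. Since $\mathcal{C}=T^{\ast}(P_{\Delta})$ and $P_{\Delta}=\overline{\co}^{\,w^{\ast}}\{\delta_I:I\subset\N\}$, the $w^{\ast}$-$w^{\ast}$ continuity of $T^{\ast}$ gives $\mathcal{C}=\overline{\co}^{\,w^{\ast}}\{\rho_I:I\subset\N\}$, where $\rho_I=T^{\ast}(\delta_I)$. For each $i$ I would fix a near-maximizer $c_i=T^{\ast}\theta_i\in\mathcal{C}$, $\varphi_i(c_i)>M_i-\alpha/4$, and record the key factorization: splitting any index $I$ into its trace $I\cap J$ on a still-to-be-chosen $J\subset N_s$ and its complement $I\setminus J$, one has $\rho_I=\mu^{J}_{s,\,I\cap J}\otimes\beta_{I\setminus J}$. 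Consequently, if $\bar\gamma_i$ denotes the complementary marginal of $c_i$, then for any configuration $A$ on $J$ the measure $\mu^{J}_{s,A}\otimes\bar\gamma_i=\int \rho_{(I\setminus J)\cup A}\,d\theta_i$ is again a $w^{\ast}$-barycentre of the $\rho_I$, hence a member of $\mathcal{C}$. This is what will make the prescribed rigid product measures admissible slice components.

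Next I would fix the block: given $s\ge 3$ and $\delta>0$ (to be chosen of order $\alpha$), let $k=k(s,\delta)$ be the constant of Lemma~\ref{lemma: existence of p} and pick $J\subset N_s$ with $|J|\ge kn\ge n$. Testing each $\varphi_i$ against measures whose complementary marginal is $\bar\gamma_i$ produces functionals on $\mathcal{M}(\Delta_J)$ of norm at most $1$, to which Lemma~\ref{lemma: existence of p} applies, yielding a coordinate $p\in J$ with $\sup_{I\in\{\emptyset,J\}}\sup_i|\varphi_i(\bar\mu^{J,p}_{s,I}-\bar{\bar\mu}^{J,p}_{s,I})|<\delta$. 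Since $\mu^{J}_{s,J}-\mu^{J}_{s,J\setminus\{p\}}=\tfrac{s-2}{s}(\bar{\bar\mu}^{J,p}_{s,J}-\bar\mu^{J,p}_{s,J})$ and $\mu^{J}_{s,\{p\}}-\mu^{J}_{s,\emptyset}=\tfrac{s-2}{s}(\bar{\bar\mu}^{J,p}_{s,\emptyset}-\bar\mu^{J,p}_{s,\emptyset})$, this says precisely that flipping the $p$-th coordinate, with the remaining coordinates of $J$ held at either extreme, moves every $\varphi_i$ by less than $\delta$. I would then classify the slices: for each $i$ decide whether the all-in configuration $\mu^{J}_{s,J}$ or the all-out configuration $\mu^{J}_{s,\emptyset}$ (tensored with $\bar\gamma_i$) is the better near-maximizer for $\varphi_i$, reorder so that the first $l$ indices are of the all-in type and the rest of the all-out type, and set the two $p$-flipped partners to be $(J,\,J\setminus\{p\})$ for $i\le l$ and $(\{p\},\,\emptyset)$ for $i>l$. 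Defining $\sigma_1,\sigma_2$ by the displayed formulas, the control above gives $\varphi_i(\sigma_2\text{-component})>\varphi_i(\sigma_1\text{-component})-\delta>M_i-\alpha$, so both the $\sigma_1$- and the $\sigma_2$-component attached to $S_i$ lie in $S_i$; hence $\sigma_1,\sigma_2\in\frac1n\sum_{i=1}^{n}S_i$.

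The step I expect to be the main obstacle is guaranteeing, for each slice, that one of the two extreme configurations on $J$ is genuinely a near-maximizer of $\varphi_i$, i.e. that replacing the true $J$-marginal of $c_i$ by an all-in or all-out configuration costs less than $\alpha/2$. For a $w^{\ast}$-continuous $\varphi_i$ this is transparent, since a continuous function on $\Delta$ depends up to $\varepsilon$ on finitely many coordinates and $J\subset N_s$ can be taken disjoint from those, making $\varphi_i$ nearly constant in the $J$-directions; the delicate case is that of a general functional in $\mathcal{M}(\Delta_J)^{\ast}$, where no finite-support reduction is available and one must instead exploit that the total influence of the coordinates of $J$ on $\varphi_i$ is controlled by $\|\varphi_i\|$, so that enlarging $J$ (the role of the factor $k$) forces the extreme configurations to be nearly optimal. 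This influence estimate, together with the single-coordinate control of Lemma~\ref{lemma: existence of p}, is the crux of the construction; the index $l$ is merely the bookkeeping recording, slice by slice, which of the two extreme directions on $J$ was selected, the common flip at $p$ being what will ultimately separate $\sigma_1$ and $\sigma_2$.
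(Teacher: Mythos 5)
You have the right skeleton (near-maximizers, a block $J\subset N_s$, Lemma~\ref{lemma: existence of p} to find the coordinate $p$, the identity $\mu^{J}_{s,J}-\mu^{J}_{s,J\setminus\{p\}}=\tfrac{s-2}{s}(\bar{\bar\mu}^{J,p}_{s,J}-\bar\mu^{J,p}_{s,J})$, and the classification into the indices $i\le l$ and $i>l$), but the step you yourself flag as the main obstacle is a genuine gap, and the workaround you sketch for it does not work. It is simply false that for a general $\varphi\in\mathcal{M}(\Delta_J)^{\ast}$ with $\Vert\varphi\Vert\le 1$ one of the two extreme configurations $\mu^{J}_{s,J}$, $\mu^{J}_{s,\emptyset}$ is nearly optimal among all $\mu^{J}_{s,A}$, no matter how large $J$ is: take $\varphi(\sigma)=\sigma(\{x\colon x(q)=1,\ x(q')=0\})$ for two fixed $q,q'\in J$; this is maximized at configurations $A$ with $q\in A$, $q'\notin A$, with value $\bigl(\tfrac{s-1}{s}\bigr)^2$, while both extreme configurations give only $\tfrac{s-1}{s}\cdot\tfrac1s$, a gap independent of $|J|$. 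So there is no ``influence estimate'' coming from enlarging $J$; you have also misattributed the role of the constant $k$, which in the paper serves only Lemma~\ref{lemma: existence of p} and has nothing to do with making extremes near-optimal.

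What the paper actually does is combinatorial and exact rather than approximate. First, one observes that for finite $J_0\subset\N$ every element of $\mathcal{C}$ is a convex combination of elements of $\mathcal{C}$ of the form $\rho^{J_0}_{I}\otimes\gamma_{I}$ with $I\subset J_0$; by convexity each near-maximizer of $\varphi_i$ can therefore be replaced by a \emph{pure} configuration $\rho^{J_0}_{I_i}\otimes\gamma_{i,I_i}$ with $\varphi_i$-value $>M_i-\delta/2$, where $I_i\subset J_0$ is arbitrary (not extreme). Second, one takes $|J_0|=n2^{n}k$ and applies a pigeonhole argument to the map sending each point of $J_0$ to its membership pattern in $(I_1,\dots,I_n)\in\{0,1\}^n$: some pattern is shared by at least $nk$ points, giving $J\subset J_0$, $|J|\ge nk$, with $J\subset I_i$ or $J\cap I_i=\emptyset$ for every $i$. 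Setting $\bar\gamma_i:=\mu^{J_0\setminus J}_{s,I_i\setminus J}\otimes\gamma_{i,I_i}$ — crucially, $\bar\gamma_i$ absorbs the non-extreme part of $I_i$, it is \emph{not} the complementary marginal of your $c_i$ with respect to a $J$ fixed in advance — one gets the exact identity $\rho^{J_0}_{I_i}\otimes\gamma_{i,I_i}=\mu^{J}_{s,J\cap I_i}\otimes\bar\gamma_i$ with $J\cap I_i\in\{\emptyset,J\}$, so the extreme configuration on $J$ \emph{is} the near-maximizer at no cost, and $l$ is determined by the pigeonhole output rather than chosen. Only then does Lemma~\ref{lemma: existence of p}, applied to $\psi_i(\sigma):=\varphi_i(\sigma\otimes\bar\gamma_i)$ on $\mathcal{M}(\Delta_J)$ with $|J|\ge nk$, produce the coordinate $p$, and the flip argument you describe closes the proof. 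Without the convexity decomposition and the $2^{n}$-fold pigeonhole your argument cannot be completed.
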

\begin{proof}

	Observe first that for every $J\subset \N$, $\vert J\vert <\infty$, every $\sigma\in \mathcal{C}$ is a convex combination of elements of $\mathcal{C}$ of the form $\rho^J_I\otimes \gamma_I$, where $I$ runs through the subsets of $J$, and $\gamma_I=T^*_{\N\setminus J}(\theta_I)$ for some probability measure $\theta_I$ on $\Delta_{\N\setminus J}$.
	
	Let $n\in \N$, $\delta>0$, $\varphi_1,\dots,\varphi_n\in \mathcal{M}(\Delta)^*$ with $\Vert \varphi_i\Vert=1$, and consider
	\[
	S(\mathcal{C}, \varphi_i, \delta)=\{\sigma\in \mathcal{C}\colon \varphi_i(\sigma)>M_i-\delta \},
	\]
	where $M_i=\sup_{\sigma\in \mathcal{C}}\varphi_i(\sigma)$.
	
	For every $s\geq 3$, let $J_0\subset N_s$, $\vert J_0\vert =n2^nk$, where $k=k(s,\delta/2)$ is given by Lemma \ref{lemma: existence of p}. For every $i\in\{1,\dots,n \}$, choose $\sigma_i\in \mathcal{C}$ such that $\varphi_i(\sigma_i)>M_i-\delta/2$. By the above observation (applied to $\sigma_i$) and a convexity argument, we may find $I_i\subset J_0$ such that $\varphi_i(\rho^{J_0}_{I_i}\otimes \gamma_{i, I_i})>M_i-\delta/2$.
	
	By a cardinality argument, there exists $J\subset J_0$, $\vert J\vert\geq nk(s,\delta/2)$, which satisfies either $J\subset I_i$ or $J\subset J_0\setminus I_i$ for every $i\in \{1,\dots,n\}$. If we put $\bar{\gamma}_i=\mu^{J_0\setminus J}_{s,I_i\setminus J}\otimes \gamma_{i, I_i}$, we have, since $J_0\subset N_s$, that $\rho^{J_0}_{I_i}\otimes \gamma_{i, I_i}=\mu^J_{s,J\cap I_i}\otimes \bar{\gamma}_i.$
	
	Define now elements $\psi_i\in \mathcal{M}(\Delta_J)^*$ by $\psi_i(\sigma)=\varphi_i(\sigma\otimes \bar{\gamma}_i)$, and apply Lemma \ref{lemma: existence of p} to find a $p\in J$ such that 
	\[
	\sup_{I\in \{\emptyset,J\}}\sup_{i\in\{1,\dots,n\}} \psi_i(\bar{\mu}^{J,p}_{s,I}-\bar{\bar{\mu}}^{J,p}_{s,I})<\delta/2.
	\]
	
	Without loss of generality we can suppose that $J\subset I_i$ for $i\in \{1,\dots,l\}$ and $J\subset J_0\setminus I_i$ for $i\in \{l+1,\dots,n\}$. By a convexity argument we deduce that
	\[
	\{\mu^J_{s,J}\otimes \bar{\gamma}_i, \mu^J_{s,J\setminus\{p\}}\otimes \bar{\gamma}_i\}\subset S_i \quad \text{for $i\in \{1,\dots,l\}$}
	\]
	and
	\[
	\{\mu^J_{s,\{p\}}\otimes \bar{\gamma}_i, \mu^J_{s,\emptyset}\otimes \bar{\gamma}_i\}\subset S_i \quad \text{for $i\in \{l+1,\dots,n\}$}.
	\]
	 \end{proof}

Let $p\in \N$ and, in the following, denote by $f_p\colon \Delta\to \R$ the function
$$
f_p(x) = \begin{cases} -1 &\mbox{if } x(p)=0 \\ 
1 & \mbox{if } x(p)=1, \end{cases} 
$$
where $x\in \Delta$. Note that $f_p\in C(\Delta)$ with $\Vert f_p\Vert=1$ for every $p\in \N$. 

In the following, we denote by $\mathcal{Y}$  the linear span of $\mathcal{C}$ and $\vert\cdot\vert$ the norm on $\mathcal{Y}$ given by Proposition \ref{prop: renorm subspace SD2P}. Recall that $\mathcal{Y}$ is a dual Banach space satisfying $B_{(\mathcal{Y},\vert\cdot\vert)}=\mathcal{K}={\rm co}(\mathcal{C}\cup -\mathcal{C})$. Denote by $(\mathcal{Y}_*, \vert\cdot \vert_*)$, the predual of $(\mathcal{Y}, \vert\cdot \vert)$. 

\begin{lemma}\label{lemma: f_p is a good set}
	The set $\{{f_p}_{\mid \mathcal{Y}}\colon p\in \N \}$ is an octahedral set for $(\mathcal{Y}_*, \vert\cdot \vert_*)^{**}$. In particular, $(\mathcal{Y}, \vert\cdot \vert)$ has the strong diameter two property.
\end{lemma}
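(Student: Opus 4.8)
The plan is to apply Lemma~\ref{lemma: SD2P implies OH in the dual} to the space $X=(\mathcal{Y}_*,|\cdot|_*)$, for which $X^*=\mathcal{Y}$ and $X^{**}=(\mathcal{Y}_*)^{**}=\mathcal{Y}^*$, taking as candidate the set $A=\{{f_p}_{\mid\mathcal{Y}}:p\in\N\}$. First I would check $A\subset B_{\mathcal{Y}_*}\setminus\{0\}$: since $\mathcal{K}=\co(\mathcal{C}\cup-\mathcal{C})\subset B_{\mathcal{M}(\Delta)}$ and $\|f_p\|=1$, we have $\|{f_p}_{\mid\mathcal{Y}}\|_*=\sup_{w\in\mathcal{K}}\left|\int f_p\,dw\right|\le 1$, while testing against $\rho_I\in\mathcal{C}$ with $p\in I$ gives ${f_p}_{\mid\mathcal{Y}}(\rho_I)=\int f_p\,d\rho_I=\frac{s-2}{s}\neq0$, where $p\in N_s$ with $s\ge3$, so ${f_p}_{\mid\mathcal{Y}}\neq0$. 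It then remains to verify the hypothesis of Lemma~\ref{lemma: SD2P implies OH in the dual}: given $n\in\N$, $0<\varepsilon<2$ and slices $S(B_\mathcal{Y},\xastast_i,\alpha)$ of $B_\mathcal{Y}=\mathcal{K}$, to produce $\xast_i,\yast_i$ in the $i$-th slice and a single $p$ with $\frac1n\sum_{i=1}^n(\xast_i-\yast_i)({f_p}_{\mid\mathcal{Y}})>2-\varepsilon$.

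The arithmetic heart is that ${f_p}_{\mid\mathcal{Y}}$ reads the Talagrand measures explicitly: as $f_p$ depends only on the coordinate $p$, whose marginal under $\mu^J_{s,I}$ is $\tfrac1s\delta_0+\tfrac{s-1}{s}\delta_1$ when $p\in I$ and its reflection when $p\notin I$, and as $p\in J$ makes $f_p$ ignore the tail factor $\bar\gamma_i$, one gets $\int f_p\,d(\mu^J_{s,I}\otimes\bar\gamma_i)=\frac{s-2}{s}$ if $p\in I$ and $-\frac{s-2}{s}$ if $p\notin I$. Assuming—this is the delicate point discussed below—that it suffices to treat weak$^*$-slices, I would choose inside each given slice a weak$^*$-slice of $\mathcal{K}$ cut by some $g_i\in C(\Delta)$; by symmetry of $\mathcal{K}$ this weak$^*$-slice meets $\mathcal{C}$ or $-\mathcal{C}$ in a slice $\tilde S_i$ of $\mathcal{C}$ determined by $\pm g_i\in C(\Delta)\subset\mathcal{M}(\Delta)^*$. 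Applying Proposition~\ref{prop: C has SD2P} to $\tilde S_1,\dots,\tilde S_n$ with $s>4/\varepsilon$ yields one common $p\in J$ and measures $\sigma_1=\frac1n\sum_i\sigma_1^i$, $\sigma_2=\frac1n\sum_i\sigma_2^i$ with $\sigma_1^i,\sigma_2^i\in\tilde S_i$, where in each $\sigma_1^i$ the relevant index set contains $p$ and in each $\sigma_2^i$ it does not; hence $\int f_p\,d\sigma_1^i=\frac{s-2}{s}$ and $\int f_p\,d\sigma_2^i=-\frac{s-2}{s}$, so $\int f_p\,d(\sigma_1^i-\sigma_2^i)=\frac{2(s-2)}{s}>2-\varepsilon$ for every $i$.

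The signs must then be matched to the side on which each weak$^*$-slice lives. For indices $i$ whose weak$^*$-slice meets $\mathcal{C}$ I set $\xast_i=\sigma_1^i$, $\yast_i=\sigma_2^i$; for indices $i$ whose weak$^*$-slice meets $-\mathcal{C}$ I instead set $\xast_i=-\sigma_2^i$, $\yast_i=-\sigma_1^i$. In either case $\xast_i,\yast_i$ lie in the $i$-th slice and $\int f_p\,d(\xast_i-\yast_i)=\frac{2(s-2)}{s}$, so averaging gives $\frac1n\sum_i(\xast_i-\yast_i)({f_p}_{\mid\mathcal{Y}})=\frac{2(s-2)}{s}>2-\varepsilon$, as required; the per-index swap is what keeps every contribution of the same sign. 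By Lemma~\ref{lemma: SD2P implies OH in the dual}, $A$ is an octahedral set for $(\mathcal{Y}_*)^{**}$. In particular $\mathcal{Y}^*=(\mathcal{Y}_*)^{**}$ is octahedral, and applying the duality between octahedrality of a dual and the strong diameter two property of its predual (\cite{becerra_octahedral_2014}) to $\mathcal{Y}$ shows that $(\mathcal{Y},|\cdot|)$ has the strong diameter two property.

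The step I expect to be the main obstacle is precisely the reduction to weak$^*$-slices invoked above. The hypothesis of Lemma~\ref{lemma: SD2P implies OH in the dual} ranges over all slices of $\mathcal{K}$, cut by functionals of $(\mathcal{Y}_*)^{**}=\mathcal{Y}^*$, whereas Proposition~\ref{prop: C has SD2P} controls only slices cut by $\mathcal{M}(\Delta)^*$, in particular by $C(\Delta)$. In a dual ball a general slice need not contain any weak$^*$-slice, and a naive Hahn--Banach separation fails because the complement of the slice need not be weak$^*$-closed; so this passage is genuinely delicate. I would try to route it through the equivalence between octahedrality of the bidual $(\mathcal{Y}_*)^{**}$ and a weak$^*$-strong diameter two property of $\mathcal{Y}$, for which only weak$^*$-slices of $\mathcal{K}$—equivalently, slices of $\mathcal{C}$ cut by $C(\Delta)$—must be controlled, matching exactly the output of Proposition~\ref{prop: C has SD2P}.
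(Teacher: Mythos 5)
Your overall architecture --- feeding Lemma~\ref{lemma: SD2P implies OH in the dual} with $X=\mathcal{Y}_*$, reducing slices of $\mathcal{K}=\co(\mathcal{C}\cup-\mathcal{C})$ to slices of $\mathcal{C}$ by the per-index sign swap, invoking Proposition~\ref{prop: C has SD2P} and computing $\int f_p\,d(\sigma_1^i-\sigma_2^i)=\tfrac{2(s-2)}{s}>2-\varepsilon$ --- is exactly the paper's, and the arithmetic is correct. The genuine gap is the step you yourself flag: the retreat to weak$^*$-slices, together with the duality you propose to justify it. That equivalence is the wrong one. Octahedrality of $(\mathcal{Y}_*)^{**}=\mathcal{Y}^*$ is dual to the \emph{full} strong diameter two property of $\mathcal{Y}=(\mathcal{Y}_*)^*$, i.e.\ to control of all convex combinations of slices of $\mathcal{K}$ cut by arbitrary elements of $(\mathcal{Y},\vert\cdot\vert)^*$; it is octahedrality of $\mathcal{Y}_*$ itself that is dual to the $w^*$-SD2P of $\mathcal{Y}$ (slices cut by $\mathcal{Y}_*$, i.e.\ essentially by $C(\Delta)$). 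The paper explicitly records that SD2P and $w^*$-SD2P are not equivalent. So your route, if completed, would only show that $\{{f_p}_{\mid\mathcal{Y}}\}$ witnesses octahedrality of $\mathcal{Y}_*$, not that it is an octahedral set for the bidual --- and the latter is what Proposition~\ref{prop: abstract bidual renorming} consumes in Theorem~\ref{thm: renorming C(Delta)*}: one must beat arbitrary $x^{**}\in(\mathcal{Y}_*)^{**}$, which in the proof of Lemma~\ref{lemma: SD2P implies OH in the dual} forces arbitrary (not merely $w^*$-) slices of $B_{(\mathcal{Y}_*)^*}=\mathcal{K}$ into the hypothesis. As written, the proof is therefore incomplete.

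The paper does not make this retreat. It keeps the slices of $\mathcal{K}$ general, passes to slices of $\mathcal{C}$ via the identity $A-A=B-B$ (your sign swap, in aggregate form), and applies Proposition~\ref{prop: C has SD2P} to those: the cutting functionals there are elements of $\mathcal{M}(\Delta)^*$ --- a far larger class than $C(\Delta)$ --- and the whole mechanism of Lemma~\ref{lemma: existence of p} (the pigeonhole choice of $p$ among $|J|\geq nk$ coordinates) is designed precisely to handle arbitrary norm-one elements of $\mathcal{M}(\Delta)^*$, not merely continuous functions. Your residual worry --- whether every slice of $\mathcal{K}$ cut by a general element of $(\mathcal{Y},\vert\cdot\vert)^*$ is captured, or approximated, by an $\mathcal{M}(\Delta)^*$-cut slice --- is a fair question to put to the paper, which does not discuss it; but the correct place to engage with it is the comparison of $\mathcal{M}(\Delta)^*$ with $\mathcal{Y}^*$, not a collapse all the way down to $w^*$-slices, which demonstrably proves a strictly weaker statement.
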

\begin{proof}
	Note first that $B_\mathcal{Y}=\mathcal{K}=\{\lambda a-(1-\lambda)b\colon \lambda\in [0,1], a,b\in \mathcal{C} \}$. By Lemma \ref{lemma: SD2P implies OH in the dual}, it is enough to prove that $\mathcal{C}$ satisfies the following property: 
	
	($\bullet$) for every $n\in \N$, $\varepsilon>0$, and every average of slices of $\mathcal{C}$, $\frac1n \sum_{i=1}^{n} S(\mathcal{C}, \xastast_i, \varepsilon)$, there exist $\xast_i, \yast_i\in S(\mathcal{C}, \xastast_i, \varepsilon)$ and $p\in \N$ such that
	\[
	\frac{1}{n}\sum_{i=1}^n(x_i^*-y_i^*)({f_p}_{\mid \mathcal{Y}}	
	)>2-\varepsilon.
	\]
	
	Indeed, assume that $\{{f_p}_{\mid \mathcal{Y}}\}_{p\in \N}$ satisfies ($\bullet$) and let us show that  $\{{f_p}_{\mid \mathcal{Y}}\colon p\in \N \}$ is an octahedral set for $(\mathcal{Y}_*, \vert\cdot \vert_*)^{**}$. Fix $n\in \N$, $\varepsilon>0$, and $S_1,\ldots ,S_n$ slices in $\mathcal{K}$. Observe that every slice $S_i$ has nonempty intersection with $\mathcal{C}$ or $-\mathcal{C}$, since $\mathcal{K}={\rm co}(\mathcal{C}\cup -\mathcal{C})$. Let $I:=\{i\in \{1,\ldots n\}:S_i\cap \mathcal{C}\neq \emptyset\}$ and $J:= \{1,\ldots n\}\setminus I$. We consider
	$$A:=\frac{1}{n}\sum_{i\in I}S_i\cap \mathcal{C}+\frac{1}{n}\sum_{i\in J}S_i\cap(-\mathcal{C})\subset \frac{1}{n}\sum_{i=1}^n S_i.$$
	Observe that 
	\begin{align*}
	A-A&=\frac{1}{n}\sum_{i\in I}S_i\cap \mathcal{C}+\frac{1}{n}\sum_{i\in J}S_i\cap(-\mathcal{C})-\frac{1}{n}\sum_{i\in I}S_i\cap \mathcal{C}-\frac{1}{n}\sum_{i\in J}S_i\cap(-\mathcal{C})\\
	&=\frac{1}{n}\sum_{i\in I}S_i\cap \mathcal{C}+\frac{1}{n}\sum_{i\in J}(-S_i)\cap\mathcal{C}-\left(\frac{1}{n}\sum_{i\in I}S_i\cap \mathcal{C}+\frac{1}{n}\sum_{i\in J}(-S_i)\cap\mathcal{C}\right)\\
	&=B-B,
	\end{align*}
	where $B:= \frac{1}{n}\sum_{i\in I}S_i\cap \mathcal{C}+\frac{1}{n}\sum_{i\in J}(-S_i)\cap\mathcal{C}$ is an average of slices in $\mathcal{C}$. Since $A-A=B-B$ and as we are assuming  ($\bullet$), we have that
	\[
	\sup_{p\in \N}\sup_{\zast\in A-A}\zast({f_p}_{\mid \mathcal{Y}})=\sup_{p\in \N}\sup_{\zast\in B-B}\zast({f_p}_{\mid \mathcal{Y}})\geq 2-\varepsilon/2.
	\]
	Therefore there are $x_i^* ,y_i^*\in S_i\cap \mathcal{C}\subset S_i$ for every $i\in I$,$x_i^* ,y_i^*\in S_i\cap (-\mathcal{C})\subset S_i$ for every $i\in J$, and $p\in \N$ such that
	$$\frac{1}{n}\sum_{i=1}^n(x_i^*-y_i^*)(f_p)>2-\varepsilon,$$
	which, by Lemma \ref{lemma: SD2P implies OH in the dual}, proves that $\{{f_p}_{\mid \mathcal{Y}}\colon p\in \N \}$ is an octahedral set  for $(\mathcal{Y}_*, \vert\cdot \vert_*)^{**}$
	
Let's prove now that $\{{f_p}_{\mid \mathcal{Y}}\}_{p\in \N}$ is  an octahedral set  for $(\mathcal{Y}_*, \vert\cdot \vert_*)^{**}$. For this we will apply again Lemma \ref{lemma: SD2P implies OH in the dual}.

	Let $n\in \N$, $S_1,\dots, S_n$ be slices of $\mathcal{C}$, and $\varepsilon>0$. Find $s\geq 3$ such that $4/s< \varepsilon$. By Proposition~\ref{prop: C has SD2P} there exist $J\subset \N$, $|J|\geq n$, and $p\in J$ such that 
	\[
	\sigma_1:=\frac{1}{n}\left(\sum_{i=1}^{l}\mu^J_{s,J}\otimes \bar{\gamma}_i+\sum_{i=l+1}^{n}\mu^J_{s,\{p\}}\otimes \bar{\gamma}_i\right)\in \frac{1}{n}\sum_{i=1}^{n}S_i,
	\]
	\[
	\sigma_2:=\frac{1}{n}\left(\sum_{i=1}^{l}\mu^J_{s,J\setminus\{p\}}\otimes \bar{\gamma}_i+\sum_{i=l+1}^{n}\mu^J_{s,\{\emptyset\}}\otimes \bar{\gamma}_i\right)\in \frac{1}{n}\sum_{i=1}^{n}S_i,
	\]
	where $l\in \{1,\dots, n\}$ and $\bar{\gamma}_i$ is a probability measure for every $i\in \{1,\dots,n\}$. 
	
		Therefore
	$$ \langle\sigma_1-\sigma_2, f_p\rangle$$ $$=
	\Bigg\langle\frac{1}{n}\left(\sum_{i=1}^{l}\mu^J_{s,J}\otimes \bar{\gamma}_i+\sum_{i=l+1}^{n}\mu^J_{s,\{p\}}\otimes \bar{\gamma}_i\right)$$
	$$-\frac{1}{n}\left(\sum_{i=1}^{l}\mu^J_{s,J\setminus\{p\}}\otimes \bar{\gamma}_i+\sum_{i=l+1}^{n}\mu^J_{s,\{\emptyset\}}\otimes \bar{\gamma}_i\right),f_p\Bigg\rangle$$
	$$=\frac{1}{n}\Bigg\langle(\mu^J_{s,J}-\mu^J_{s,J\setminus\{p\}})\otimes \left(\sum_{i=1}^{l}\bar{\gamma}_i\right)+(\mu^J_{s,\{p\}}-\mu^J_{s,\emptyset})\otimes \left(\sum_{i=l+1}^{n}\bar{\gamma}_i\right) ,f_p\Bigg\rangle$$
	$$= \frac{s-2}{s}\Bigg\langle(\delta^{(p)}_1-\delta^{(p)}_0)\otimes \Bigg[\frac{l}{n}\left(\mu^{J\setminus\{p\}}_{s,J\setminus\{p\}}\otimes\left(\frac{1}{l}\sum_{i=1}^{l}\bar{\gamma}_i\right) \right)$$
	$$+\frac{n-l}{n}\left(\mu^{J\setminus\{p\}}_{s,\emptyset}\otimes\left(\frac{1}{n-l}\sum_{i=l+1}^{n}\bar{\gamma}_i\right) \right)\Bigg] ,f_p\Bigg\rangle$$
	$$=2\frac{s-2}{s}> 2-\varepsilon.$$
	In the last equality we use the Fubini theorem, and the fact that the measure which appears between brackets
	$$\Bigg[\frac{l}{n}\left(\mu^{J\setminus\{p\}}_{s,J\setminus\{p\}}\otimes\left(\frac{1}{l}\sum_{i=1}^{l}\bar{\gamma}_i\right) \right)
	+\frac{n-l}{n}\left(\mu^{J\setminus\{p\}}_{s,\emptyset}\otimes\left(\frac{1}{n-l}\sum_{i=l+1}^{n}\bar{\gamma}_i\right) \right)\Bigg] $$ is a probability measure, since it is a convex combination of probability measures.	
	
	From the dual relation of an octahedral norm and the strong diameter two property \cite{becerra_octahedral_2014}, we get that $(\mathcal{Y}, \vert\cdot \vert)$ is a dual Banach space with the strong diameter two property.
\end{proof}

Denote by $j$ the canoncial embedding of $C(\Delta)$ into $C(\Delta)^{**}$. Observe now that, from Proposition \ref{prop: renorm subspace SD2P} we  have for every $f\in C(\Delta)$ that

$$f_{\mid \mathcal{Y}}\in \mathcal{Y}_*\ \text{and}\ \vert f_{\mid \mathcal{Y}}\vert_*\leq \Vert f_{\mid \mathcal{Y}}\Vert_{(\mathcal{Y},\Vert\cdot\Vert_{C(\Delta)^*})^{*}}.$$

In the following lemma we will collect some of the properties of $f_p$. In particular, the linear spans of $f_p$ in $C(\Delta)$, $\left.j(C(\Delta))\right|_\mathcal{Y}$, and $\mathcal{Y}_*$ are all isomorphic to $\ell_1$.  
 
\begin{lemma}\label{lemma: isormphic copy of ell_1}
	The functions $f_p$ satisfy the following:
	\begin{enumerate}
		\item $1=\|f_p\|_{C(\Delta)}\geq \|\left.f_p\right|_\mathcal{Y}\|_{(\mathcal{Y},\Vert\cdot\Vert_{C(\Delta)^*})^*}\geq |{f_p}_{\mid \mathcal{Y}}|_*$ for every $p\in \N$.
		\item For every $n\in \N, \alpha_1,\dots, \alpha_n\in \R$ 
		$$
	\Big	\|\sum_{i=1}^{n}\alpha_{i} f_{p_i}\Big\|_{C(\Delta)}=\sum_{i=1}^{n}|\alpha_{i}|.
		$$
		\item For every $n\in \N, \alpha_1,\dots, \alpha_n\in \R$
		$$
		\sum_{i=1}^{n}|\alpha_{i}|\geq\Big\|\sum_{i=1}^{n}\alpha_{i} \left.f_{p_i}\right|_\mathcal{Y}\Big\|_{(\mathcal{Y},\Vert\cdot\Vert_{C(\Delta)^*})^*}\geq \Big|\sum_{i=1}^{n}\alpha_{i} {f_{p_i}}_{\mid \mathcal{Y}}\Big|_*\geq \frac13\sum_{i=1}^{n}|\alpha_{i}|.
		$$
	\end{enumerate}	
\end{lemma}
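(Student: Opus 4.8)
The plan is to prove the three items in order of increasing difficulty, isolating the single genuinely nontrivial estimate, namely the last inequality in (3). Throughout I write $g:=\sum_{i=1}^n\alpha_i f_{p_i}$ and assume, as is implicit, that $p_1,\dots,p_n$ are distinct. For (1): the equality $\|f_p\|_{C(\Delta)}=1$ was already recorded; the first inequality holds because restricting the functional $j(f_p)$ from $C(\Delta)^*$ to the subspace $\mathcal{Y}$ cannot increase its norm; and the second inequality is exactly the observation displayed just before the lemma, which in turn follows from $|\cdot|\geq\|\cdot\|_{C(\Delta)^*}$ on $\mathcal{Y}$ (Proposition~\ref{prop: renorm subspace SD2P}): this inclusion gives $B_{(\mathcal{Y},|\cdot|)}\subset B_{(\mathcal{Y},\|\cdot\|_{C(\Delta)^*})}$, and hence a supremum over the smaller ball is smaller.

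For (2) I would use that each $f_{p_i}$ depends only on the coordinate $p_i$. The upper bound $\|g\|_{C(\Delta)}\leq\sum_i|\alpha_i|$ is the triangle inequality together with $\|f_{p_i}\|=1$. For the matching lower bound I choose $x\in\Delta$ with $x(p_i)=1$ when $\alpha_i\geq0$ and $x(p_i)=0$ when $\alpha_i<0$; distinctness of the $p_i$ makes these choices independent, and then $\alpha_i f_{p_i}(x)=|\alpha_i|$ for every $i$, so $g(x)=\sum_i|\alpha_i|$.

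The two left-hand inequalities in (3) are now immediate: the first is the restriction argument of (1) applied to $g$ combined with (2), and the second is again $|\cdot|\geq\|\cdot\|_{C(\Delta)^*}$. The heart of the matter is the lower bound $|g_{\mid\mathcal{Y}}|_*\geq\frac13\sum_i|\alpha_i|$. My plan is first to rewrite the predual norm as a $C(\Delta)$-norm of $Tg$. Since $B_{(\mathcal{Y},|\cdot|)}=\mathcal{K}={\rm co}(\mathcal{C}\cup-\mathcal{C})$ and $g$ is linear, $|g_{\mid\mathcal{Y}}|_*=\sup_{\sigma\in\mathcal{K}}|g(\sigma)|=\sup_{\sigma\in\mathcal{C}}|g(\sigma)|$; writing $\sigma=T^*\theta$ with $\theta\in P_\Delta$ and using the adjoint, $g(T^*\theta)=(T^*\theta)(g)=\theta(Tg)=\int Tg\,d\theta$, so that $|g_{\mid\mathcal{Y}}|_*=\sup_{\theta\in P_\Delta}\left|\int Tg\,d\theta\right|=\|Tg\|_{C(\Delta)}$, the last equality because the Dirac masses belong to $P_\Delta$.

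It then remains to compute $Tg$, and this eigenvector-type identity is the step I expect to be the main obstacle, since it requires unwinding the product-measure definition of $\rho_I$. Writing $s(p)$ for the unique index with $p\in N_{s(p)}$, I claim $Tf_p=(1-\tfrac{2}{s(p)})f_p$. Indeed, $f_p$ depends only on coordinate $p$, so $Tf_p(I)=\rho_I(f_p)=\nu^{(p)}_{s(p),\,I\cap N_{s(p)}}(f_p)$; evaluating the one-dimensional measure $\nu^{(p)}$ in its two cases gives $\tfrac{s(p)-2}{s(p)}$ when $p\in I$ and $-\tfrac{s(p)-2}{s(p)}$ when $p\notin I$, that is, $\tfrac{s(p)-2}{s(p)}f_p(x_I)$. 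Hence $Tg=\sum_i\alpha_i(1-\tfrac{2}{s(p_i)})f_{p_i}$, and applying (2) with the coefficients $\alpha_i(1-\tfrac{2}{s(p_i)})$ yields $|g_{\mid\mathcal{Y}}|_*=\|Tg\|_{C(\Delta)}=\sum_i|\alpha_i|\,(1-\tfrac{2}{s(p_i)})$. Since $s\geq3$ forces $1-\tfrac{2}{s(p_i)}\geq\tfrac13$, this is at least $\tfrac13\sum_i|\alpha_i|$, which finishes (3).
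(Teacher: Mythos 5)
Your proof is correct. Parts (1) and (2) coincide with the paper's argument (the paper also derives (1) from the displayed observation preceding the lemma and proves (2) by evaluating at a point $x_0$ with $x_0(p_i)$ matching the sign of $\alpha_i$). For the key lower bound in (3) you take a mildly different and somewhat cleaner route: the paper simply tests $g=\sum_i\alpha_i f_{p_i}$ against the single measure $\rho_I=T^*\delta_I\in\mathcal{C}\subset B_{(\mathcal{Y},|\cdot|)}$, where $I$ is chosen so that $p_i\in I$ exactly when $\alpha_i>0$, and computes $\rho_I(g)=\sum_i|\alpha_i|\tfrac{s_i-2}{s_i}$ by Fubini; you instead identify $|g_{\mid\mathcal{Y}}|_*=\sup_{\mathcal{C}}|g|=\|Tg\|_{C(\Delta)}$ and observe that each $f_p$ is an eigenvector of $T$ with eigenvalue $\tfrac{s(p)-2}{s(p)}$, so that (2) yields the exact value $|g_{\mid\mathcal{Y}}|_*=\sum_i|\alpha_i|\tfrac{s(p_i)-2}{s(p_i)}$ rather than only the inequality. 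The underlying one-coordinate computation $\int f_p\,d\nu^{(p)}_{s(p),I\cap N_{s(p)}}=\pm\tfrac{s(p)-2}{s(p)}$ is identical in both arguments; what your packaging buys is the equality and the structural fact that $T$ acts diagonally on the $f_p$, at the small cost of justifying $\sup_{\mathcal{K}}|g|=\|Tg\|_{\infty}$ (which you do correctly, since $\mathcal{K}=\co(\mathcal{C}\cup-\mathcal{C})$ and $|g|$ is convex). Your standing assumption that the $p_i$ are distinct is likewise implicit in the paper and is genuinely needed for (2).
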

\begin{proof}
(1) is clear from the preceeding comment.

(2). Let $n\in \N$, $\alpha_1,\dots, \alpha_n\in \R\setminus\{0\}$, and $p_1,\dots, p_n\in \N$. Since $\|f_{p_i}\|_{C(\Delta)}=1$, we clearly have that $\|\sum_{i=1}^{n}\alpha_{i} f_{p_i}\|_{C(\Delta)}\leq \sum_{i=1}^{n}|\alpha_{i}|$. For the other inequality, choose an $x_0\in \Delta$ such that 
\[x_0(p_i)=\begin{cases} 0 &\mbox{if }\  \alpha_{i}<0 \\ 
1 & \mbox{if } \alpha_i>0. \end{cases} \]
Hence,
\[
\Big\|\sum_{i=1}^{n}\alpha_{i} f_{p_i}\Big\|_{C(\Delta)}\geq \sum_{i=1}^{n}\alpha_{i} f_{p_i}(x_0)=\sum_{i=1}^{n}|\alpha_{i}|.
\]
(3). Let $n\in \N$, $\alpha_1,\dots, \alpha_n\in \R\setminus\{0\}$, and $p_1,\dots, p_n\in \N$. Find $s_1,\dots, s_n\geq 3$ such that $p_i\in N_{s_i}$. Observe that the inequalities 	
$$
\sum_{i=1}^{n}|\alpha_{i}|\geq\Big\|\sum_{i=1}^{n}\alpha_{i} \left.f_{p_i}\right|_\mathcal{Y}\Big\|_{(\mathcal{Y},\Vert\cdot\Vert_{C(\Delta)^*})^*}\geq \Big|\sum_{i=1}^{n}\alpha_{i} {f_{p_i}}_{\mid \mathcal{Y}}\Big|_*
$$
are clear, because $\|f_{p_i}\|_{C(\Delta)}=1$ and $\|\cdot\|_{(\mathcal{Y},\Vert\cdot\Vert_{C(\Delta)^*})^*}\geq |\cdot|_*$ on $j(C(\Delta)_{\mid \mathcal{Y}}$. For the lower estimate choose  $I\subset \N$ such that $p_i\in I$ if $\alpha_i>0$ and $p_i\notin I$ if $\alpha_{i}<0$. Consider the measure
$$
\rho_{I}=\underset{s\geq 3}{\bigotimes}\underset{i\in N_s}{\bigotimes}\nu_{s,I\cap N_s }^{(i)}
$$
and denote by $$
\mu_k:=\underset{s\geq 3}{\bigotimes}\underset{i\in N_s\setminus \{p_k\}}{\bigotimes}\nu_{s,I\cap N_s }^{(i)}.
$$
Note that $\mu_k$ is a probability measure for every $k\in\{1,\dots,n\}$.
By the Fubini theorem we have 
\begin{align*}
\Bigg\vert \sum_{i=1}^{n}\alpha_{i}f_{p_i} \Bigg\vert_*&\geq \Bigg\vert \sum_{i=1}^{n}\alpha_{i} \int_{\Delta} f_{p_i} d\rho_{I} \Bigg\vert\\
&=\Bigg\vert \sum_{i=1}^{n}\alpha_{i} \int_{\Delta} f_{p_i} d\Big(\nu_{s_i,I\cap N_{s_i} }^{(p_i)}\otimes \mu_i\Big) \Bigg\vert\\
&= \Bigg\vert \sum_{i=1}^{n}\alpha_i \int_{\{0,1\}^{\N\setminus \{p_i\}}}\int_{\{0,1\}_{p_i}}f_{p_i} d\nu_{s_i,I\cap N_{s_i} }^{(p_i)}d\mu_i \Bigg\vert\\
&=\Bigg\vert \sum_{\alpha_{i}>0}\alpha_i \int_{\{0,1\}^{\N\setminus \{p_i\}}}\int_{\{0,1\}_{p_i}}f_{p_i} d\Big(\frac1{s_i}\delta^{(p_i)}_0+\frac{s_i-1}{s_i}\delta^{(p_i)}_1\Big)d\mu_i\\
&+\sum_{\alpha_{j}<0}\alpha_j \int_{\{0,1\}^{\N\setminus \{p_j\}}}\int_{\{0,1\}_{p_j}}f_{p_j} d\Big(\frac{s_j-1}{s_j}\delta^{(p_j)}_0+\frac1{s_j}\delta^{(p_j)}_1\Big)d\mu_j \Bigg\vert\\
&=\Bigg\vert \sum_{\alpha_{i}>0}\alpha_i \Big(\frac{s_i-1}{s_i}-\frac1{s_i}) +\sum_{\alpha_{j}<0}\alpha_j (\frac1{s_j}-\frac{s_j-1}{s_j}\Big) \Bigg\vert
\end{align*}
\begin{align*}
&=\sum_{\alpha_{i}>0}|\alpha_i| \Big(\frac{s_i-2}{s_i}\Big)+\sum_{\alpha_{j}<0}|\alpha_i| \Big(\frac{s_j-2}{s_j}\Big)\\
&\geq \frac{1}{3}\sum_{i=1}^{n}|\alpha_{i}|.
\end{align*}
\end{proof}

\section{Main results}
Let us summarize the key properties proved in the above sections.\begin{enumerate}
\item[(1)] (Proposition \ref{prop: renorm subspace SD2P}) There is a (non closed) subspace $\mathcal{Y}$ of $C(\Delta)^*$ and a dual norm $\vert\cdot\vert$ in $\mathcal{Y}$ such that $\Vert \cdot\Vert_{C(\Delta)^*}\leq \vert\cdot\vert$ on $\mathcal{Y}$. Furthermore, the restriction to $\mathcal{Y}$ of every element of $C(\Delta)$ is an element in $\mathcal{Y}_*$, the predual of $(\mathcal{Y},\vert\cdot\vert)$.
\item[(2)] (Lemmas \ref{lemma: f_p is a good set} and \ref{lemma: isormphic copy of ell_1}) There is an isometric $\ell_1$-sequence $\{f_p\}$ in $C(\Delta)$ such that $\{{f_{p}}_{\mid \mathcal{Y}}\}\subset \mathcal{Y}_*$ is an octahedral set for $(\mathcal{Y}_*,|\cdot|_*)^{**}$ and also it is an isomorphic $\ell_1$-sequence.
 \end{enumerate}

The results of the above sections allow us to get now the bidual octahedral  renorming for $C(\Delta)$.
\begin{theorem}\label{thm: renorming C(Delta)*}
	The Banach space $C(\Delta)$ admits an equivalent norm such that its bidual norm is octahedral.
\end{theorem}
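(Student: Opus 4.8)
The plan is to feed the data assembled in Sections 3 into the abstract renorming scheme of Proposition~\ref{prop: abstract bidual renorming}, applied with $X=C(\Delta)$ and $Y=(\mathcal{Y}_*,|\cdot|_*)$, the predual of the dual space $(\mathcal{Y},|\cdot|)$ produced from the Talagrand set. The operator $S\colon C(\Delta)\to\mathcal{Y}_*$ will be the restriction map $S(f)={f}_{\mid\mathcal{Y}}$. By the summary point~(1) (that is, Proposition~\ref{prop: renorm subspace SD2P} together with the comment preceding Lemma~\ref{lemma: isormphic copy of ell_1}), this map does land in $\mathcal{Y}_*$ and satisfies $|{f}_{\mid\mathcal{Y}}|_*\leq\|f\|_{C(\Delta)}$, so $S$ is a bounded linear operator with $\|S\|\leq 1$. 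It then remains only to check the two hypotheses (A1) and (A2).

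For hypothesis (A1) I take $B:=\{{f_p}_{\mid\mathcal{Y}}\colon p\in\N\}$. By Lemma~\ref{lemma: isormphic copy of ell_1}(1) each ${f_p}_{\mid\mathcal{Y}}$ lies in $B_{\mathcal{Y}_*}$, and by Lemma~\ref{lemma: f_p is a good set} this set is precisely an octahedral set for $(\mathcal{Y}_*,|\cdot|_*)^{**}$. This is exactly what (A1) asks for, so it is verified with no further work.

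For hypothesis (A2) I set $Z:=\overline{\lin}\{f_p\colon p\in\N\}\subseteq C(\Delta)$. The two-sided estimates of Lemma~\ref{lemma: isormphic copy of ell_1}(2)--(3) show that on $\lin\{f_p\}$ one has $\tfrac13\sum_i|\alpha_i|\leq |S(\sum_i\alpha_i f_{p_i})|_*\leq\|\sum_i\alpha_i f_{p_i}\|_{C(\Delta)}=\sum_i|\alpha_i|$. Hence $S|_Z$ is bounded below, so it is an isomorphism onto its range, and that range is closed; since $\lin\{f_p\}$ is dense in $Z$ and $S$ is continuous, the range coincides with $\widehat{B}=\overline{\lin}\{{f_p}_{\mid\mathcal{Y}}\}$. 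Thus $S|_Z\colon(Z,\|\cdot\|_{C(\Delta)})\to(\widehat{B},|\cdot|_*)$ is an onto isomorphism, which is (A2).

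With (A1) and (A2) established, Proposition~\ref{prop: abstract bidual renorming} directly yields an equivalent norm on $C(\Delta)$ whose bidual norm is octahedral. The genuine content of the theorem has therefore already been absorbed by the Talagrand construction of Section 3; the only delicate point in the present argument is the lower bound $\tfrac13\sum_i|\alpha_i|$ in Lemma~\ref{lemma: isormphic copy of ell_1}(3), which is what prevents the restriction map from collapsing the $\ell_1$-copy and so guarantees that $S|_Z$ remains an isomorphism. Everything else is a routine substitution into the abstract scheme, so I expect this verification of (A2) to be the main (and essentially the only) obstacle.
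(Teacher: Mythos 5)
Your proposal is correct and follows essentially the same route as the paper: the same operator $S\colon C(\Delta)\to\mathcal{Y}_*$ given by restriction to $\mathcal{Y}$, the same octahedral set $\{{f_p}_{\mid\mathcal{Y}}\}$ for (A1) via Lemma~\ref{lemma: f_p is a good set}, and the same subspace $Z=\overline{\lin}\{f_p\}$ for (A2) via the two-sided estimates of Lemma~\ref{lemma: isormphic copy of ell_1}, before invoking Proposition~\ref{prop: abstract bidual renorming}. Your spelled-out justification that the bounded-below estimate forces $S|_Z$ to be an onto isomorphism with closed range equal to $\widehat{B}$ is exactly what the paper leaves implicit.
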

\begin{proof}
Denote by $j_\mathcal{Y}$ the natural bounded linear map from $C(\Delta)$ to $\left.j(C(\Delta))\right|_\mathcal{Y}:=\{x_{\mid \mathcal{Y}}:x\in C(\Delta)\subset C(\Delta)^{**}\}$ and by $i$ the inclusion map from $\left.j(C(\Delta))\right|_\mathcal{Y}$ to $\mathcal{Y}_*$. Hence, $S:=i\circ j_\mathcal{Y}$ is a bounded linear map from $C(\Delta)$ to $\mathcal{Y}_*$.   Let  $Z$ (respectively, $Z_*$) be the subspace given by the closed linear span of $f_p$ in $C(\Delta)$ (respectively, by $\{{f_p}_{\mid \mathcal{Y}}\}$ in $(\mathcal{Y}_*,|\cdot|_*)$). 

By Lemma \ref{lemma: isormphic copy of ell_1},   $\left.S\right|_Z$ is an onto isomorphism from $Z\subset C(\Delta)$ onto $(Z_*, \vert \cdot \vert_*)$, and $\{{f_p}_{\mid \mathcal{Y}}\}$ is an octahedral set for $(\mathcal{Y}_*,|\cdot|_*)^{**}$ by Lemma \ref{lemma: f_p is a good set}. Now, applying Proposition \ref{prop: abstract bidual renorming} we are done.
\end{proof} 

As a consequence, we get the main announced result, which answers the Question \ref{question: question1}  in the separable case.

\begin{theorem}\label{thm: separable bidual octahedral}
	If $X$ is a separable Banach space containing a subspace isomorphic to $\ell_1$, then there is an equivalent norm in $X$ such that the bidual $X^{\ast\ast}$ is octahedral. 
\end{theorem}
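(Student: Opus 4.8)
The plan is to deduce the separable case from the concrete space $C(\Delta)$ by transporting the octahedral set $\{{f_p}_{\mid\mathcal Y}\}$ through a single application of Proposition~\ref{prop: abstract bidual renorming}. Concretely, I would keep $Y=(\mathcal Y_*,\vert\cdot\vert_*)$ as the auxiliary space, together with its octahedral set $B=\{{f_p}_{\mid\mathcal Y}\}$ for $(\mathcal Y_*,\vert\cdot\vert_*)^{**}$ furnished by Lemma~\ref{lemma: f_p is a good set}; here $\widehat B=Z_*$, the closed span of $\{{f_p}_{\mid\mathcal Y}\}$, which by Lemma~\ref{lemma: isormphic copy of ell_1} is isomorphic to $\ell_1$. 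Thus it suffices to produce a bounded operator $S\colon X\to\mathcal Y_*$ and a subspace $Z\subset X$ so that $\left.S\right|_Z\colon Z\to Z_*$ is an onto isomorphism; Proposition~\ref{prop: abstract bidual renorming} then yields the desired equivalent norm making $X^{**}$ octahedral.

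To build $S$, first use that $X$ contains $\ell_1$ to fix an isomorphic $\ell_1$-sequence $\{z_p\}\subset X$ and put $Z:=\overline{\lin}\{z_p\}$. Since $\{f_p\}$ is an isometric $\ell_1$-sequence in $C(\Delta)$ by Lemma~\ref{lemma: isormphic copy of ell_1}, the assignment $z_p\mapsto f_p$ extends to an isomorphism $\iota_0\colon Z\to\overline{\lin}\{f_p\}\subset C(\Delta)$. Recalling that the restriction map $S_0\colon C(\Delta)\to\mathcal Y_*$, $S_0(f)=f_{\mid\mathcal Y}$, carries $\overline{\lin}\{f_p\}$ onto $Z_*$ isomorphically (this is precisely the content used in Theorem~\ref{thm: renorming C(Delta)*} via Lemma~\ref{lemma: isormphic copy of ell_1}), I would extend $\iota_0$ to a bounded operator $\iota\colon X\to C(\Delta)$ and set $S:=S_0\circ\iota$. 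Then $\left.S\right|_Z=S_0\circ\iota_0$ is a composition of onto isomorphisms, hence an onto isomorphism from $Z$ to $Z_*=\widehat B$, which gives (A2); condition (A1) is exactly Lemma~\ref{lemma: f_p is a good set}.

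The heart of the argument, and the step I expect to be the main obstacle, is the extension of $\iota_0\colon Z\to C(\Delta)$ to a bounded $\iota\colon X\to C(\Delta)$. I emphasize that $\iota$ need not be injective on $X$; only the identity $\left.\iota\right|_Z=\iota_0$ is needed. Since $X$ is separable, $\Delta$ is compact metrizable and the restriction (quotient) map $r\colon\Xast\to Z^*$ is weak$^*$-continuous and onto, the $Z^*$-valued weak$^*$-continuous map $t\mapsto\delta_t\circ\iota_0$ can be lifted through $r$: for a suitable $\lambda>0$ the carrier sending $t\in\Delta$ to $r^{-1}(\delta_t\circ\iota_0)\cap\lambda B_{\Xast}$ has nonempty, weak$^*$-compact, convex values in the metrizable space $\lambda B_{\Xast}$ and is lower semicontinuous (using openness of the quotient map $r$), so Michael's selection theorem provides a weak$^*$-continuous selection $t\mapsto\varphi_t$. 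Defining $\iota(x)(t):=\varphi_t(x)$ yields a bounded operator $X\to C(\Delta)$ with $\left.\iota\right|_Z=\iota_0$, as required; dually, $\iota^*$ realizes the ``good embedding'' of $C(\Delta)^*$ into $\Xast$ alluded to in the introduction.

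With $S$, $Z$ and $B$ in hand, both hypotheses of Proposition~\ref{prop: abstract bidual renorming} hold, and a direct application of that proposition produces an equivalent norm on $X$ whose bidual norm is octahedral, completing the proof.
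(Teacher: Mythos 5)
The decisive step of your argument --- extending the isomorphism $\iota_0\colon Z\to\overline{\lin}\{f_p\}\subset C(\Delta)$, $z_p\mapsto f_p$, to a bounded operator $\iota\colon X\to C(\Delta)$ --- is a genuine gap, and the justification offered does not close it. The carrier $t\mapsto r^{-1}(\delta_t\circ\iota_0)\cap\lambda B_{\Xast}$ would have to be lower semicontinuous for the \emph{weak$^*$} topology on $\lambda B_{\Xast}$ (a norm-continuous selection is hopeless here, since $t\mapsto\delta_t\circ\iota_0$ has norm-discrete image: $\Vert\delta_t\circ\iota_0-\delta_s\circ\iota_0\Vert\geq 2/\Vert\iota_0^{-1}\Vert$ whenever $t\neq s$). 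But the restriction map $r\colon\Xast\to Z^*$ is open only for the norm topologies; restricted to balls with their weak$^*$ topologies it is in general not open, and weak$^*$ lower semicontinuity of the carrier fails. Note that your argument nowhere uses that $Z$ is an $\ell_1$-copy or the specific form of $\iota_0$: if it were valid it would show that \emph{every} operator from \emph{every} subspace of a separable space into $C(\Delta)$ extends, i.e.\ that $C(\Delta)$ is separably injective. By Sobczyk's and Zippin's theorems the only separable infinite-dimensional separably injective space is $c_0$, and $C(\Delta)\not\cong c_0$, so such extensions genuinely fail in general. Whether your particular operator $z_p\mapsto f_p$ extends for every choice of $\ell_1$-sequence $\{z_p\}$ in every separable $X$ is precisely what would need proof, and nothing in the proposal supplies it.

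The paper circumvents this obstruction by reversing the direction of the arrow: instead of mapping $X$ \emph{into} $C(\Delta)$, it invokes the Hagler--Stegall / Dilworth--Girardi--Hagler theorem to realize $C(\Delta)$ as a quotient $X/Y$ of $X$, transfers the renorming of Theorem~\ref{thm: renorming C(Delta)*} to $X/Y$ together with its octahedral set $\{w_p\}$, and then lifts this \emph{countable} set through the quotient map to a bounded sequence $\{z_p\}\subset X$ --- a triviality, since quotient maps are open and no continuity in a parameter is required. The resulting restriction of the quotient map to the closed span of $\{z_p\}$ is the onto isomorphism needed for Proposition~\ref{prop: abstract bidual renorming}. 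If you wish to keep the ``embed into $C(\Delta)$'' scheme, you would have to establish the operator-extension statement for your specific $\iota_0$ by some argument exploiting its special structure; as written, that is an unproved (and, in this generality, false) claim.
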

\begin{proof}
Assume that $X$ contains a subspace isometric to $\ell_1$. From \cite[Theorem~2]{dilworth_dual_2000} (see also \cite{hagler_banach_1973}) we know that there is a closed subspace $Y$ of $X$ such that $C(\Delta)$ is isometric to $X/Y$. Denote by $\pi$ the quotient map from $X$ to $X/Y$. By the proof of Theorem~\ref{thm: renorming C(Delta)*} there is an equivalent norm $|\cdot|$ on $X/Y$ such that its bidual norm is octahedral, moreover there exists an octahedral set $\{w_p\colon p\in \N  \}\subset B_{(X/Y,|\cdot |)}$ whose linear span $W$ is isomorphic to $\ell_1$. 

Since $\pi(B^{\circ}_X)=B^{\circ}_{X/Y}$, then there is a bounded sequence $\{z_p\}\subset X$  such that $\pi(z_p)=w_p$ for every $p\in \N$. Denote by $Z$ the closed linear span of $\{z_p\}$. From the boundedness of $\pi$ and $\{z_p\}$ we conclude the existence of $K,L,M>0$ such that for every $n\in\N$ and $\alpha_1,\dots, \alpha_n\in \R$ we have that
\[
K\sum_{i=1}^{n}|\alpha_i|\leq \Big|\sum_{i=1}^{n}\alpha_i w_{p_i} \Big|\leq L\Big\|\sum_{i=1}^{n}\alpha_i z_{p_i} \Big\|\leq M\sum_{i=1}^{n}|\alpha_i|.
\]
Thus $\pi|_Z\colon Z\to (W,|\cdot |)$ is an onto isomorphism. Now, applying Proposition \ref{prop: abstract bidual renorming} we are done.
 \end{proof}

Note that the separability assumption is only used in the above proof, to assure that for every separable Banach space $X$ with $\ell_1$-copies, there is a linear and bounded operator from $X$ onto $C(\Delta)$. This last assertion is false in the non-separable case, for example it is known to be false if $X=\ell_{\infty}$, since every separable quotient of $\ell_{\infty}$ is reflexive \cite{hagler_banach_1973}. Then the above proof does not work in the general case. However,  our techniques can be applied to some non-separable Banach spaces. Indeed, assume that $X$ is a Banach space with the separable complementation property, that is, for every separable subspace $Y$ of $X$, there is a complemented separable subspace $Z$ of $X$ such that $Y\subset Z$. If moreover $X$ contains an isomorphic copy of $\ell_1$, then there is a complemented subspace $Z$ of $X$ containing $\ell_1$-copies. Hence the existence of a linear and continuous operator from $X$ onto $C([0,1])$ is clear, and the proof of Theorem \ref{thm: separable bidual octahedral} works. Then we have the following

\begin{corollary}\label{corollary:WCD} Let $X$ be a Banach space with the separable complementation property containing isomorphic copies of $\ell_1$. Then there is an equivalent norm in $X$ such that the bidual $X^{\ast\ast}$ is octahedral.
\end{corollary}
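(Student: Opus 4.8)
The plan is to reduce to the separable case already settled in Theorem~\ref{thm: separable bidual octahedral}. A careful reading of that proof shows that separability is invoked only to guarantee a bounded linear operator from $X$ onto $C(\Delta)$; once such a surjection is available, the lifting argument and the application of Proposition~\ref{prop: abstract bidual renorming} proceed without change. Thus the entire task is to manufacture such a surjection under the present hypotheses, after which the conclusion follows by repeating the earlier proof verbatim.

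To produce it, I would first fix an isomorphic copy of $\ell_1$ in $X$; its closed linear span $Y_0$ is a separable subspace. Applying the separable complementation property to $Y_0$ yields a complemented separable subspace $Z$ of $X$ with $Y_0\subset Z$, so that $Z$ is separable and still contains an isomorphic copy of $\ell_1$; fix a bounded linear projection $P\colon X\to Z$. Since $Z$ is separable and contains $\ell_1$, the separable case furnishes a bounded linear operator from $Z$ onto $C(\Delta)$ (equivalently onto $C([0,1])$, the two spaces being isomorphic). Composing this surjection with $P$ gives a bounded linear operator $q\colon X\to C(\Delta)$ that is onto; that is, $C(\Delta)$ is realized as a quotient of $X$.

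With $q$ in hand I would run the argument of Theorem~\ref{thm: separable bidual octahedral}: by Theorem~\ref{thm: renorming C(Delta)*} endow $C(\Delta)$ with an equivalent norm $|\cdot|$ whose bidual is octahedral, together with an octahedral set $\{w_p\}$ whose closed span $W$ is isomorphic to $\ell_1$; use the openness of $q$ to lift the $w_p$ to a bounded sequence $\{z_p\}\subset X$ with $q(z_p)=w_p$; verify, by the same two--sided norm estimate as before, that $q$ restricts to an onto isomorphism from $\overline{\lin\{z_p\}}$ onto $(W,|\cdot|)$; and finish by invoking Proposition~\ref{prop: abstract bidual renorming}. The only genuinely new ingredient, and hence the main obstacle, is the construction of the surjection $q$ onto $C(\Delta)$ without assuming $X$ separable; the separable complementation property (through a complemented separable subspace carrying $\ell_1$) is precisely what supplies it, and every subsequent step is the already-established separable machinery.
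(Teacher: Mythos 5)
Your proposal is correct and follows essentially the same route as the paper: the authors likewise observe that separability in Theorem~\ref{thm: separable bidual octahedral} is used only to produce a bounded linear surjection onto $C(\Delta)$, and they obtain it exactly as you do, by passing to a complemented separable subspace containing $\ell_1$ (via the separable complementation property) and composing the surjection from that subspace with the projection. The remaining lifting and the appeal to Proposition~\ref{prop: abstract bidual renorming} are then identical to the separable case.
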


The above corollary can be applied, for example, to the family of weakly countably determined Banach spaces, since a member of the above family  satisfies the separable complementation property (see \cite[Chapter VI, Lemma 2.4]{DGZ}).   

Recall that a Banach space is said to be ($w^*$-)strongly regular if every closed, bounded and convex subset of $X$ contains convex combinations of ($w^*$-)slices with diameter arbitrarily small. It is known that $\Xast$ is strongly regular if and only if $X$ does not contain isomorphic copies of $\ell_1$, and also it is known that strong regularity and $w^*$-strong regularity are equivalent properties in dual Banach spaces \cite[Corollary~VI.18]{ghoussub_some_1987}.

The next consequence collects the different characterizations of octahedrality, strong diameter two property and strong regularity through the existence of isomorphic copies of $\ell_1$ and it is a dual answer to Question 2, in the setting of separable predual.
\begin{corollary}\label{corollary: strongregularity}
	Let $X$ be a separable Banach space. The following are equivalent:
	\begin{itemize}
		\item[(i)] $X$ contains a subspace isomorphic to $\ell_1$.
		\item[(ii)] $\Xast$ fails to be strongly regular.
		\item[(iii)] $X^*$ fails to be $w^*$-strongly regular.
		\item[(iv)] $X$ has an equivalent octahedral norm.
		\item[(v)] $X$ has an equivalent norm such that every convex combination of $w^*$-slices in $B_{X^*}$ has diameter two.
		\item[(vi)] For every $\varepsilon>0$ there is an equivalent norm in $X$ such that every convex combination of slices in $B_{X^*}$ has diameter, at least, $2-\varepsilon$.
		\item[(vii)] there exists an equivalent norm in $X$ such that $\Xast$ has the strong diameter two property.
		\item[(viii)] there exists an equivalent norm in $X$ such that $X^{\ast\ast}$ is octahedral.
	\end{itemize}
\end{corollary}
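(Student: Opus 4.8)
The plan is to present the corollary as an assembly: most of the eight equivalences are immediate consequences of results already quoted in the paper, and the only genuinely new input is Theorem~\ref{thm: separable bidual octahedral}. So I would first dispatch all the ``library'' implications and then close a short cycle around the new theorem.

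First I would record the equivalences coming from the literature. The pair (i) $\Leftrightarrow$ (ii) is exactly \cite[Corollary~VI.18]{ghoussub_some_1987} ($\Xast$ is strongly regular precisely when $X$ omits $\ell_1$), and (ii) $\Leftrightarrow$ (iii) follows since strong regularity and $w^*$-strong regularity coincide on dual spaces, again by \cite[Corollary~VI.18]{ghoussub_some_1987}. The equivalence (i) $\Leftrightarrow$ (iv) is Godefroy's theorem quoted in the introduction. For (iv) $\Leftrightarrow$ (v) I would invoke the duality of \cite{becerra_octahedral_2014} in its predual form (also contained in that reference): a norm on $X$ is octahedral if and only if every convex combination of $w^*$-slices of $B_{\Xast}$ has diameter two; since this holds norm by norm, the two existence statements (iv) and (v) are equivalent. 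Applying the same duality in the stated form $\Xast$ octahedral $\Leftrightarrow$ $X$ has the SD2P, but with $\Xast$ playing the role of $X$, gives $\Xastast$ octahedral $\Leftrightarrow$ $\Xast$ has the SD2P, which is precisely (vii) $\Leftrightarrow$ (viii).

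It then remains to fold (vi), (vii), (viii) into the equivalence class of (i). The implication (i) $\Rightarrow$ (viii) is Theorem~\ref{thm: separable bidual octahedral}, the main result of the paper. Conversely (viii) $\Rightarrow$ (iv) is immediate, since a norm whose bidual is octahedral is itself octahedral, as noted in the introduction; combined with (iv) $\Leftrightarrow$ (i) this puts (viii) in the class. Using (viii) $\Leftrightarrow$ (vii) from the previous paragraph, it suffices to handle the two conditions (vi) and (vii). The step (vii) $\Rightarrow$ (vi) is trivial: the SD2P forces every convex combination of slices of $B_{\Xast}$ to have diameter exactly two, hence at least $2-\varepsilon$, and the single norm witnessing (vii) works for every $\varepsilon$.

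The only implication requiring a real argument is (vi) $\Rightarrow$ (i), which I would prove by contraposition. If $X$ omits $\ell_1$, then $\Xast$ is strongly regular by \cite[Corollary~VI.18]{ghoussub_some_1987}, and since omitting $\ell_1$ is an isomorphic invariant this persists under every equivalent renorming of $X$. Hence, for any equivalent norm, the closed bounded convex set $B_{\Xast}$ admits convex combinations of slices of arbitrarily small diameter; taking $\varepsilon=1$ in (vi) would produce a norm forcing \emph{all} such combinations to have diameter at least $1$, a contradiction. The main obstacle throughout is purely bookkeeping: one must keep careful track of which form of the octahedrality/diameter-two duality is being applied --- the $w^*$-slice predual version for (iv) $\Leftrightarrow$ (v) versus the full-slice dual version applied to $\Xast$ for (vii) $\Leftrightarrow$ (viii) --- and consistently distinguish slices of $B_{\Xast}$ determined by $\Xastast$ from $w^*$-slices determined by $X$.
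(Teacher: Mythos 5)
Your proposal is correct and follows essentially the same route as the paper: the equivalences among (i)--(vi) and between (vii) and (viii) are taken from the literature (the paper simply cites \cite{becerra_octahedral_2014} and \cite{ghoussub_some_1987} for these, which you unpack explicitly and accurately), and the genuinely new link (i) $\Leftrightarrow$ (viii) is supplied by Theorem~\ref{thm: separable bidual octahedral} together with the standard fact that octahedrality of $\Xastast$ implies that of $X$. Your extra care in distinguishing the $w^*$-slice form of the duality for (iv) $\Leftrightarrow$ (v) from the full-slice form for (vii) $\Leftrightarrow$ (viii), and your contrapositive argument for (vi) $\Rightarrow$ (i) via strong regularity, are both sound.
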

\begin{proof}The equivalence between the assertions (i) to (vi), and between (vii) and (viii), were written in \cite{becerra_octahedral_2014}. The equivalence between (i) and (viii) is the Theorem \ref{thm: separable bidual octahedral}.
\end{proof}

A dual Banach space $X^*$ is said to have the $w^*$-strong diameter two property ($w^*$-SD2P) if every convex combination of $w^*$-slices in $B_{X^*}$ has diameter two. Observe that the above result gives, in particular, that SD2P and $w^*$-SD2P in $\Xast$ are equivalent under renorming, whenever $X$ is separable. However, these two properties are not equivalent (see \cite{becerra_octahedral_2014}, see also \cite{lopez})). 

Note that Corollary~\ref{corollary: strongregularity} establishes a dichotomy: either every convex bounded subset of the dual space has
arbitrarily small convex combinations of $w^*$-slices, or there exists a dual unit ball such that every convex combination of weak slices has diameter 2.

Recall that the ball topology $b(X)$ on a Banach space $X$, is the coarsest topology on $X$ so that the norm closed balls of $X$ are $b(X)$-closed. If $X$ does not contain isomorphic copies of $\ell_1$, then for every equivalent norm in $X$, the ball topology of $X^{**}$ coincides with the weak-star topology on the bidual unit ball (see \cite{godefroy_kalton}). Another consequence of Theorem \ref{thm: separable bidual octahedral} is the following (see \cite{godefroy_kalton}).

\begin{corollary}\label{corollary: balltopology} Let $X$ be a separable Banach space and denote by $b(X^{**})_1$ the ball topology in $X^{**}$ restricted to $B_{X^{**}}$. If $X$ contains a subspace isomorphic to $\ell_1$, then there is an equivalent norm in $X$ so that, not only $b(X^{**})_1$ fails to be Hausdorff, but every pair of nonempty $b(X^{**})_1$-open subsets of $B_{X^{**}}$ has nonempty intersection.
\end{corollary}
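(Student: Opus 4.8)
The plan is to invoke Theorem~\ref{thm: separable bidual octahedral} to fix, once and for all, an equivalent norm on $X$ for which the bidual norm $\Vert\cdot\Vert$ on $X^{\ast\ast}$ is octahedral, and then to compute the ball topology $b(X^{\ast\ast})_1$ with respect to this bidual norm (recall that the ball topology depends on the chosen norm, which is exactly what the statement permits). Since the closed balls of $(X^{\ast\ast},\Vert\cdot\Vert)$ form a subbasis of closed sets for $b(X^{\ast\ast})$, a basis of $b(X^{\ast\ast})_1$-open sets is given by the sets
\[
U=\{y\in B_{X^{\ast\ast}}\colon \Vert y-a_k\Vert>r_k\ \text{for}\ k=1,\dots,N\},
\]
with $a_k\in X^{\ast\ast}$ and $r_k\geq0$. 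Hence it suffices to prove that any two nonempty basic open sets of this form meet: an arbitrary nonempty open set contains a nonempty basic one, irreducibility for basic open sets then transfers to all open sets, and in particular it rules out the Hausdorff property, since $B_{X^{\ast\ast}}$ has more than one point.

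So let $U=\{y\in B_{X^{\ast\ast}}\colon \Vert y-a_i\Vert>r_i,\ i=1,\dots,p\}$ and $V=\{y\in B_{X^{\ast\ast}}\colon \Vert y-b_j\Vert>s_j,\ j=1,\dots,q\}$ be nonempty. First I would extract a uniform gap from nonemptiness: choosing witnesses $u\in U$ and $v\in V$ and setting
\[
\delta=\min\Big(\min_i(\Vert u-a_i\Vert-r_i),\ \min_j(\Vert v-b_j\Vert-s_j)\Big)>0,
\]
which is strictly positive because only finitely many strict inequalities are involved. Putting $C=\max\bigl(\max_i(1+\Vert a_i\Vert),\ \max_j(1+\Vert b_j\Vert)\bigr)$ and using $\Vert u\Vert,\Vert v\Vert\leq1$, the triangle inequality yields $r_i\leq\Vert u-a_i\Vert-\delta\leq(1+\Vert a_i\Vert)-\delta$ and likewise $s_j\leq(1+\Vert b_j\Vert)-\delta$.

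Next I would apply the octahedrality of $(X^{\ast\ast},\Vert\cdot\Vert)$ to the finite-dimensional subspace $E=\lin\{a_1,\dots,a_p,b_1,\dots,b_q\}$ with a parameter $\varepsilon$ satisfying $0<\varepsilon<\delta/C$. This produces a single direction $y\in S_{X^{\ast\ast}}$ with $\Vert x+y\Vert\geq(1-\varepsilon)(\Vert x\Vert+1)$ for every $x\in E$. Taking $x=-a_i$ gives
\[
\Vert y-a_i\Vert\geq(1-\varepsilon)(1+\Vert a_i\Vert)=(1+\Vert a_i\Vert)-\varepsilon(1+\Vert a_i\Vert)>(1+\Vert a_i\Vert)-\delta\geq r_i,
\]
because $\varepsilon(1+\Vert a_i\Vert)\leq\varepsilon C<\delta$; the identical computation with $x=-b_j$ gives $\Vert y-b_j\Vert>s_j$. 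Since $\Vert y\Vert=1$, we conclude $y\in U\cap V$, so $U\cap V\neq\emptyset$, as desired.

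The conceptual heart of the argument, and the only genuine obstacle, is that a single octahedral direction $y$ must simultaneously escape all finitely many balls while remaining inside $B_{X^{\ast\ast}}$; this is precisely the strength of octahedrality, where one unit vector $y$ is chosen to work against an entire finite-dimensional subspace at once, and the quantitative gap $\delta$ coming from the nonemptiness of $U$ and $V$ is exactly what absorbs the loss measured by $\varepsilon$. Everything else is routine: the reduction to basic open sets and the final observation that an irreducible topological space with more than one point cannot be Hausdorff.
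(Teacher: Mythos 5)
Your argument is correct, but it is organized quite differently from the paper's. The paper's proof is essentially a two-line citation: it invokes Theorem~\ref{thm: separable bidual octahedral} to obtain the octahedral bidual renorming and then quotes the equivalence between (1) and (3) in Lemma~9.1 of \cite{godefroy_kalton} (applied to $X^{**}$, noting that the lemma's proof does not need separability) to pass from octahedrality of $X^{**}$ to the irreducibility of $b(X^{**})_1$. You instead reprove that implication of Godefroy--Kalton's lemma from scratch: reduce to basic open sets (finite intersections of complements of closed balls), extract a quantitative gap $\delta$ from the nonemptiness witnesses, and use one octahedral direction $y\in S_{X^{**}}$ against the finite-dimensional span of all the centers to escape every ball simultaneously while staying in $B_{X^{**}}$. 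The estimates check out: $r_i\le\Vert u-a_i\Vert-\delta\le(1+\Vert a_i\Vert)-\delta$ and $\Vert y-a_i\Vert\ge(1-\varepsilon)(1+\Vert a_i\Vert)>(1+\Vert a_i\Vert)-\delta\ge r_i$ once $\varepsilon C<\delta$, and the degenerate case where one of the sets is all of $B_{X^{**}}$ is trivial. What your route buys is a self-contained and fully quantitative proof that, in particular, makes visible why no separability of $X^{**}$ is needed (a point the paper must assert about the cited lemma); what it costs is length, since the implication you reprove is exactly the content of the reference. Both approaches rest on Theorem~\ref{thm: separable bidual octahedral} in the same way.
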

\begin{proof} The fact that $b(X^{**})_1$ is not Hausdorff, under the hypotheses of the above corollary, is a consequence of Theorem 9.3 in \cite{godefroy_kalton}. The last conclusion in the above corollary is a consequence of Theorem \ref{thm: separable bidual octahedral} joint to the equivalence between (1) and (3) in Lemma 9.1 of \cite{godefroy_kalton}, which is valid in the non-separable case with the same proof.
\end{proof}

It is worth saying that in \cite[Question F]{godefroy_kalton} it is asked if $b(X^{**})_1$ fails to be Hausdorff for every equivalent norm in $X$, whenever $X$ contains a subspace isomorphic to $\ell_1$. 

We will end this note with a couple of questions for the non-separable case. We do not know whether Question \ref{question: question1} is separably determined nor whether it is possible to renorm $\ell_{\infty}$ such that its bidual is octahedral.

\section*{Acknowledgement}
We would like to thank Prof. G. Godefroy for his interest during the development of this work and notifying us about Corollary \ref{corollary: balltopology}. Also we thank an anonymous referee for helpful comments that improved the exposition and for notifying us about Corollary \ref{corollary:WCD}.

\end{document}